\newcommand{\beq}[1]{\begin{equation} \label{#1}}
\newcommand{\eeq}{\end{equation}}
\newcommand{\bea}{\bed\begin{array}{rl}}
\newcommand{\eea}{\end{array}\eed}
\newcommand{\bed}{\begin{displaymath}}
\newcommand{\eed}{\end{displaymath}}
\newcommand{\barray}{\begin{array}{ll}}
\newcommand{\earray}{\end{array}}
\newcommand{\disp}{\displaystyle}
\newcommand{\ad}{&\!\disp}
\newcommand{\aad}{&\disp}
\newcommand{\al}{\alpha}
\newcommand{\la}{\lambda}
\newcommand{\La}{\Lambda}
\newcommand{\sg}{\sigma}
\newcommand{\Sg}{\Sigma}
\newcommand{\Ga}{\Gamma}
\newcommand{\dl}{\delta}
\newcommand{\Dl}{\Delta}
\newcommand{\cd}{(\cdot)}
\def\phi{\varphi}
\def\indi{{\bf 1}}
\def\half{\frac{1}{2}}
\newcommand{\vsg}{\varsigma}
\newcommand{\CF}{{\mathcal F}}
\newcommand{\CU}{\mathcal{U}}
\newcommand{\CO}{\mathcal{O}}
\newcommand{\CL}{\mathcal{L}}
\newcommand{\CP}{\mathcal{P}}
\newcommand{\CR}{\mathcal{R}}
\newcommand{\CN}{\mathcal{N}}
\newcommand{\CQ}{\mathcal{Q}}
\newcommand{\EE}{{\mathbb E}}
\newcommand{\PP}{{\mathbb P}}
\newcommand{\NN}{{\mathbb N}}
\newcommand{\rr}{{\mathbb R}}
\newcommand{\QQ}{{\mathbb Q}}
\newcommand{\fL}{\mathfrak{L}}
\newcommand{\wdt}{\widetilde}
\newcommand{\wdh}{\widehat}
\newcommand{\qv}[1]{\langle #1 \rangle}
\newcommand{\bqv}[1]{\big\langle #1 \big\rangle}
\newcommand{\Bqv}[1]{\Big\langle #1 \Big\rangle}
\numberwithin{equation}{section}
\newtheorem{thm}{Theorem}[section]
\newtheorem{lem}[thm]{Lemma}
\newtheorem{rem}[thm]{Remark}
\newtheorem{exm}[thm]{Example}
\newtheorem{ass}[thm]{Assumption}
\newtheorem{Algorithm}[thm]{Algorithm}
\newcommand{\thmref}[1]{Theorem~{\rm \ref{#1}}}
\newcommand{\figref}[1]{Figure~{\rm \ref{#1}}}
\newcommand{\secref}[1]{Section~{\rm \ref{#1}}}
\begin{document}
\title[Numerical approximation for partial observation control of SPDEs]{Numerical approximations for partially observed optimal control of stochastic partial differential equations
}

\author{Feng Bao}
\address{Department of Mathematics, Florida State University, FL 32306}
\email{fbao@fsu.edu}

\author{Yanzhao Cao} 
\address{Department of Mathematics and Statistics, Auburn University, Auburn, AL, 36849}
\email{yzc0009@auburn.edu}

\author{Hongjiang Qian}
\address{Department of Mathematics and Statistics, Auburn University, Auburn, AL 36849}
\email{hjqian.math@gmail.com}

\thanks{The research of F. Bao was supported by the National Science Foundation under grant DMS-214267 and the Department of Energy under grant DE-SC002541; the research of H. Qian and Y. Cao was supported by the Department of Energy under grant DE-SC0022253, DE-SC002564.}

\subjclass[2020]{93E11, 60G35, 65K10, 60H15, 60H10}
\keywords{Partially observed optimal control, numerical approximation, nonlinear filtering, stochastic partial differential equations, stochastic maximum principle, stochastic gradient descent}

\begin{abstract}
In this paper, we study numerical approximations for optimal control of a class of stochastic partial differential equations with partial observations. The system state evolves in a Hilbert space, whereas observations are given in finite- dimensional space $\rr^d$. We begin by establishing stochastic maximum principles (SMP) for such problems, where the system state is driven by a cylindrical Wiener process. The corresponding adjoint equations are characterized by backward stochastic partial differential equations. We then develop numerical algorithms to solve the partially observed optimal control. Our approach combines the stochastic gradient descent method, guided by the SMP, with a particle filtering algorithm to estimate the conditional distributions of the state of the system. Finally, we demonstrate the effectiveness of our proposed algorithm through numerical experiments.
\end{abstract}
\maketitle

\section{Introduction}
For a fixed finite horizon $T>0$ and a bounded domain $\CO=(0,L) \subset \rr$, we consider the following controlled stochastic partial differential equations (SPDEs):
\beq{sde}
dX(t) = [AX(t)+F(X(t),u(t))]dt + G(X(t),u(t)) dW(t), \quad X(0)=X_0 \in L^2(0,L),
\eeq
where $W:[0,T]\times \Omega \to E$ denotes a cylindrical $\CQ$-Wiener process in a separable Hilbert space $E$ on a stochastic basis $(\Omega, \CF, \{\CF_t\}_{t\in [0,T]}, \PP)$ satisfying usual conditions. Here $\CQ$ is a positive self-adjoint nuclear operator on $E$ and $A: D(A) \subset H:=L^2(\CO) \to H$ is a densely defined self-adjoint, negative definite linear operator with domain $D(A)$ and compact inverse. The reaction term $F: H\times U \to H$ and diffusion coefficients $G: H\times U \to \CL(E;H)$ are nonlinear continuous operator depending on a control process $u$ taking values in $\CU$, a subset of another Hilbert space $U$. 

Suppose that the full state of process $X$ is not directly observable, we instead observe a function of $X$ corrupted by noise. It leads to the observation process $Y$ given by 
\beq{Y}
dY(t) = h(X(t))dt + dB(t), \quad Y(0) = 0\in \rr^d
\eeq
where $h: H\to \rr^d$ is a continuous function, and $B(t), t \geq 0$ is a standard Brownian motion in $\rr^d$, independent of $W(t)$. The purpose of the classical filtering problem is to estimate the conditional expectation $\pi_t(\phi)=\EE[\phi(X(t))| \CF_t^Y]$ for a class of functions $\phi$, where $\CF_t^Y$ is the $\sg$-algebra generated by the observation process, that is, $\CF_t^Y:=\sg\{Y(s): s \leq t\}$. For nonlinear filtering of the stochastic partial differential equation, we refer to \cite{Kus70,AFZ97} and references therein.

One would like to determine the control to minimize their cost functional. In \eqref{sde}, the control may depend only on observations and cannot be determined from the state process $X$ only. This control problem is referred to as a partially observed optimal control problem; see Fleming and Pardoux \cite{FP82}, Bensoussan \cite{Ben92}, Pardoux \cite{Par06}, and references therein. It combines classical stochastic optimal control with non-linear filtering problem. In this paper, we focus on systems where the state process follows the stochastic partial differential equations \eqref{sde} driven by a cylindrical Wiener process, and the observation process follows \eqref{Y} in a finite-dimensional space $\rr^d$.

This setting naturally arises in applications where large-scale distributed systems, such as \eqref{sde}, are either physically inaccessible or too high-dimensional for full state monitoring. Instead, partial observations, like \eqref{Y}, provide indirect access to system state. The goal of the controller is to design control strategies based on available but incomplete information $Y$. To further illustrate the motivation of studying partially observed system \eqref{sde} and \eqref{Y}, we take an aquatic system in the ecological problem as an example. Aquatic ecosystems, such as the Great Lakes, are complex environments where water quality and marine life depend on a delicate balance of organic and inorganic agents. The concentration of organic and inorganic such as pollutants and nutrients in the water body can be described by a stochastic  partial differential equation (SPDE) as follows:
\beq{aqu}\left\{\barray\ad 
\frac{\partial C}{\partial t}-D \Dl C + (\nabla C)v = b(C,u)+N_C, \quad t\geq 0, \; \xi\in \CO \\
\ad C|_{\partial \CO} =0, \quad C(0,\xi)= C_0(\xi),\, \xi \in \CO,
\earray\right.\eeq
where $\CO$ is an open, connected, bounded domain representing the aquatic body. The $C$ is the concentration level of $m_1$ different organic and inorganic agents such as pollutants and nutrients. The third term on the left of the first equation \eqref{aqu} represents the transport of $C$ due to water movement, where $v$ is given velocity as a function of space-time. The function $b$ represents the interactions between $m_2$ different control agents $u$ and the $m_1$ different pollutants and nutrients $C$. The $N_C$ is the distributed noise representing the additive effect of randomness, such as land run-offs from surrounding farmlands, acid rain, accidental oil spills, and summer cottages, etc. The aquatic system supports diverse species, including microorganisms and fish populations, where the stock of fish is regulated by the Department of Fisheries. The biomass per unit volume of $m_3$ different species of population $y$ is governed
by 
\beq{ob-h}
dy= h(C,y)dt+ N_y,\quad y(0)=y_0, t\geq 0
\eeq
where $h(C,y)$ represents the growth of the species and $N_y$ accounts for random environmental factors. For example, the function $h$ can take a standard logistic growth function. Regulatory agencies, such as the Department of Fisheries and Environments, are interested in seeking optimal strategies to maintain water quality and promote marine life, e.g. by applying antipollutants, biological agents predating unwanted microorganisms, physical removal of solid water, algae, etc. They would like to minimize their cost functional to achieve goals. For the formulation of \eqref{aqu} as an SPDE \eqref{sde} and its corresponding partially observed optimal control problem, we refer to \cite[pp. 1593-1597]{Ahm96}. Another example from electromagenetic interference control problem can also be found in \cite{Ahm96}. 

The primary challenge of partially observed control problem is that the control process must be adapted to the observation filtration, while  observations depends on controlled process, leading to circular dependency. To overcome this difficulty, we use the measure transformation approach to introduce
\beq{M}
M(t):=\exp\Big\{\int_0^t h(X(s))dY(s)-\half \int_0^t |h(X(s))|^2 ds\Big\},
\eeq
where $h$ is bounded continuous mapping from $H$ to $\rr^d$. Then $M(t)$ is the unique solution to stochastic ordinary differential equations (SODEs):
\beq{Mt}
dM(t) = M(t) h(X(t))dY(t),\quad M(0)= 1.
\eeq
Define a new measure $\QQ$ by 
\bea\ad 
\frac{d\QQ}{d\PP}\Big|_{\CF_T}:= M(T)
\eea
It follows from Girsanov's theorem \cite{KS91} that $W\cd$ and $B\cd$ are cylindrical Wiener process and Brownian motions in $\rr^d$, resepectively, in the new probability space $(\Omega,\CF,\{\CF_t\}_{t\in [0,T]}, \QQ)$. We would like to remark that we have considered $Y$ a prior as a Wiener process in $(\Omega,\CF,\{\CF_t\}, \PP)$, rather than $B$ from modeling perspective; see \cite{WWX24}. The cost functional is given by 
\beq{Ju}\barray
J(u)\ad =\EE^\QQ \Big[\int_0^T L(X(t), u(t))dt+\Phi(X(T))\Big] \\
\aad = \EE^\PP \Big[\int_0^T M(t) L(X(t),u(t))dt+ M(T)\Phi(X(T))\Big],
\earray\eeq
for running cost functional $L: H\times U \to \rr$ and terminal cost $\Phi: H \to \rr$, where $\EE^\QQ$ and $\EE^\PP$ represents expectation on $(\Omega, \CF, \{\CF_{t}\}_{t\in [0,T]},\QQ)$ and $(\Omega, \CF, \{\CF_t\}_{t\in [0,T]}, \PP)$, respectively. Throughout the paper, we take $\EE$ as $\EE^\PP$ if there is no specification. The control problem is to minimize $J(u)$ defined \eqref{Ju} subject to \eqref{sde} and \eqref{Mt}. That is, 
\medskip

\noindent \textbf{Problem}: find $u^\star \in \CU_{ad}[0,T]$ such that
\beq{contr-prob}
J(u^\star)= \inf_{u \in \CU_{ad}[0,T]} J(u).
\eeq
where $\CU_{\text{ad}}[0,T]$ is the set of admissible controls defined as 
\beq{control}
\CU_{ad}[0,T]=\{u: [0,T]\times \Omega \to U | u \text{ is } \CF^Y\text{-progressively measurable}\}.
\eeq

The primary objective of this paper is to formulate and analyze a fully implementable numerical algorithm to solve the partially observed optimal control problem in \eqref{sde}, \eqref{Mt}, \eqref{Ju}, and \eqref{contr-prob}. Our approach leverages the stochastic maximum principle (SMP) to develop a stochastic gradient descent (SGD) algorithm to compute the optimal control. This is combined with a particle filtering algorithm to compute the conditional distribution of the state process given the observations. To the best of our knowledge, there are no existing results on numerical approximations for partially observed optimal control problems where the state process is governed by a SPDE \eqref{sde}, and observations evolve in  $\rr^d$. We aim to fill this gap in this paper.

The optimal control of partially observed diffusion processes is a well-known challenging problem in literature. A classical approach to address this problem is to use the ``separation principle'', first introduced by Wonham \cite{Won68}. This principle allows the problem to be solved in two steps: (i) estimating the system state using the noisy observation process and (ii) applying a memoryless function of this estimate as a control input. Another widely used technique is reformulating the partially observed control problem into a completely observed stochastic control problem, where the state dynamics are governed by the Duncan-Mortensen-Zakai (DMZ) equation, a linear SPDE. This reformulation results in an infinite-dimensional optimal control problem, often called the ``separated problem'', dating back to the work of Fleming and Pardoux \cite{FP82}. For further developments on partially observed control problems in finite-dimensional spaces, see \cite{GS00,BV75,NN90,Zha90,Zho92} and references therein.

In recent years, stochastic optimal control of SPDEs has gained increasing attention. For comprehensive study of stochastic optimal control for SPDEs without partial observations,  we refer to \cite{DM13,FGS17,FZ20,FO16,FHT18,LZ14,LZ18} and references therein. However, the literature on partially observed optimal control problems where state evolves in infinite-dimensional spaces remains relatively sparse. A notable contribution is Ahmed \cite{Ahm96}, who established the existence of optimal relaxed controls for SPDEs with finite-dimensional observations, assuming the diffusion coefficient takes the form $G(X(t),u(t))=\sqrt{\CQ}$, where $\CQ$ is a symmetric positive operator in $H$, and the control appears in feedback form, i.e.,  $F(X,u)=F(X)+B(X,u(t,Y))$ for some continuous bounded maps $B: H \times U \to H$. Later, Ahmed \cite{Ahm19} extended these results to a general class of nonlinear partially observed stochastic evolution equations in Banach spaces, proving the existence of optimal feedback controls (see also \cite{Ahm14} for related results in infinite-dimensional spaces). 

While a rigorous mathematical framework for partially observed control problems exits under suitable conditions in both finite and infinite-dimensional space, most stochastic differential equations (SDEs) and SPDEs lack closed-form solutions. Consequently, numerical approximations play a crucial role in solving these problems.

In this paper, we assume the existence of an optimal control for the partially observed control problem and focus on developing numerical algorithms to approximate it. In the classical stochastic optimal control problem, two major frameworks exist for numerical computation: (i) method based on stochastic maximum principle (SMP), and (ii) method based on the dynamic programming principle (DPP). A well-established DPP-based methodology is the Markov chain approximation method, introduced by Kushner \cite{Kus77,KD01}. This technique has been extensively applied to find the optimal control for stochastic differential equations, including those with regime-switching and random jumps; see \cite{YWQN21,QWY22} for completely observed control problem and recent work \cite{LTT24} for partially observed ones.  However, this approach suffers from the curse of dimensionality, as it requires solving high-dimensional Hamilton-Jacobi-Bellman (HJB) partial differential equations. 

Recently, SMP-based methods have gained popularity as an efficient alternative; see \cite{GLTZZ17}. These methods compute  gradients using the stochastic maximum principle and use stochastic gradient descent (SGD) algorithm to iteratively update the control. In the context of partial observations, a nonlinear filtering  algorithm is incorporated to compute the conditional distribution of the state process given observations. Notably, Archibald et al. \cite{ABYZ20} developed SGD algorithms combined with a particle filtering algorithm to compute the optimal control. A related approach was proposed by Liang et al. \cite{LHBAZ24} who used backward SDE filter in \cite{BM17}. More recently, Wan et al. \cite{WWX24} studied a setting where the system state and observation process are driven by correlated noise and introduced a branching particle filter algorithm in combination with SGD to compute the partially observed optimal control.

Our main contributions are twofold. First, we established the stochastic maximum principle for partially observed optimal control problems using a measure transformation technique. The system state evolves in an infinite-dimensional Hilbert space, while the observation process is a finite-dimensional process in $\rr^d$. Second, we used the SMP to develop a stochastic gradient descent (SGD) algorithm to compute the optimal control. It consists of the finite-element approximation for forward-backward SPDEs and the particle filtering algorithm to estimate the conditional distribution of the state given observations.

The structure of the paper is as follows. In \secref{sec:pre}, we introduce the necessary assumptions and derive the stochastic maximum principle for the partially observed optimal control problem \eqref{sde}, \eqref{Mt}, and \eqref{Ju}. In this framework, the first-order adjoint equations are characterized by both backward stochastic partial differential equations (BSPDEs) and backward stochastic differential equations (BSDEs). In \secref{sec:fe}, we use finite-element method to discretize the forward-backward stochastic partial differential equations (FB-SPDEs) in space and apply the implicit Euler method for time discretization to obtain the numerical approximation scheme for FB-SPDEs. In \secref{sec:alg}, we use the stochastic maximum principle established in \secref{sec:pre} to construct a SGD algorithm combined with the particle filtering algorithm to approximate the optimal partially observed control. Finally, \secref{sec:num} provides numerical examples to illustrate the effectiveness of our approach.

\section{Preliminaries and main results} \label{sec:pre}
\subsection{Notation and assumptions}
Given two real separable Hilbert spaces $E,E'$, $\CL(E,E')$ denotes the space of bounded linear operators from $E$ to $E'$, endowed with the usual operator norm. The $\CL_1(E,E')$ denotes the subspace of trace class operators and $\CL_2(E;E')$ is the space of Hilbert-Schmidt operators, endowed with the Hilbert-Schmidt norm. For $E=E'$, we write $\CL(E), \CL_1(E)$, and $\CL_2(E)$ instead of $\CL(E,E'), \CL_1(E,E')$, and $\CL_2(E,E')$. We denote by $H:=L^2(\CO)$ and use $|\cdot|_H, \qv{\cdot,\cdot}_H$ to denote the norm and inner product in $H$, respectively. The Euclidean norm in $\rr^d$ is denoted by $|\cdot|$ and the corresponding inner product is $\qv{\cdot,\cdot}_{\rr^d}$. 
Here and below, we use the symbol $|\cdot|,\qv{\cdot,\cdot}$ to denote a norm and the inner product when the corresponding space is clear from the context, otherwise we use a subscript. For a nonlinear map $F: H\times U \to H$, we denote by $\nabla_x F,\nabla_u F$ the corresponding G\^{a}teaux derivative with respect to state variable and control variable, respectively. Their adjoint are denoted as $\nabla_x^* F,\nabla_u^* F$. Throughout the paper, we use $K$ as a generic constant which may vary from place to place. 

We will use the following class of processes throughout the paper.
\begin{itemize}
\item $L_\CP^2(\Omega; L^2([0,T];H))$: the space of all predictable $H$-valued process $X: \Omega \times [0,T] \to H$ satisfying $\EE\int_0^T |X(t)|_H^2 dt <\infty$.

\item $L_\CP^2(\Omega; C([0,T];H))$: the space of all predictable $H$-valued continuous process $X: \Omega \times [0,T] \to H$ satisfying $\EE[\sup_{t\in [0,T]}|X(t)|_H^2] <\infty$. 

\item $L_\CF^2([0,T];H)$: the space of stochastic process $X$ with values in $H$, adapted to filtration $\CF_t$ such that $\EE \int_0^T |X(t)|_H^2 dt <\infty$.
\end{itemize}

Let $\{e_n\}_{n\geq 0}$ be an orthonormal basis of $H$ consisting of eigenfunctions of $A$ with corresponding eigenvalues $\{-\la_n\}_{n\geq 0}$. For any $r\in \rr$ we can define the fractional operator $(-A)^{r/2}: D((-A)^{r/2}) \to H$ by 
\bea\ad
(-A)^{r/2}x:= \sum_{n=1}^\infty \la_n^{r/2} x_n e_n 
\eea
for all
\bea\ad 
x\in D((-A)^{r/2}) :=\Big\{x= \sum_{n=1}^\infty x_n e_n\in H: \|x\|_r^2 :=\sum_{n=1}^\infty \la_n^r x_n^2 <\infty \Big\}.
\eea
Let $H^r:= D((-A)^{r/2})$. Then $\|x\|_r  =\|(-A)^{r/2}x\|_H$ defines a norm on $H^r$.

\begin{rem}{\rm
If $A=\Dl$, where $\Dl$ denotes the Laplace operator with Dirichlet boundary conditions, it is well-known that $H^1=H_0^1(\CO)$ and $H^2=H^2(\CO) \cap H_0^1(\CO)$; see \cite{Kru14}. In the case of Neumann boundary conditions, we can consider the operator $(A-\al I)$ for some constant $\al>0$ and $F(x,u)+\al x$ in the nonlinearity instead.
} 
\end{rem}
Consider a separable Hilbert space $V$, continuously embedded in $H$. Its dual is denoted by $V'$. We will consider the solution of \eqref{sde} on the Gelfand triple
\bea
V \hookrightarrow H \hookrightarrow V'
\eea
where $V=H^1$ equipped with norm $\|\cdot\|_{H^1}^2=|\cdot|_H^2 + \|\cdot\|_1^2$. 
We assume the following assumptions on the coefficients of partially observed control system \eqref{sde} and \eqref{Y}.

\begin{ass} \label{ass} For any $x,y\in H$, we assume 
\begin{itemize}
\item[(H1)] $A \in \CL(V;V')$ and there exist constant $K_1>0$ and $\la \geq 0$ such that 
\bea
\qv{-A v, v}+ \la |v|^2 \geq K_1 \|v\|^2, \quad \forall\, v \in V.
\eea
\item[(H2)] For $u \in U$, $F(x,u): H\times U \to H$ and $G(x,u): H\times U \to \CL(E; H)$ are G\^{a}teaux-differentiable, with continuous bounded derivatives $\nabla_x F, \nabla_u F, \nabla_x G, \nabla_u G$. It implies the following Lipschitz continuity of $F,G$:
\bea
|F(x,u)-F(y,u)|_H \leq  K |x-y|_H \\
\|G(x,u)-G(y,u)\|_{\CL(E;H)} \leq K |x-y|_H
\eea
\item[(H3)] The running cost functional $L(x,u): H\times U \to \rr$ and terminal cost $\Phi: H \to \rr$ are G\^{a}teaux differentiable. Moreover $\nabla_x L , \nabla_u L$, and $\nabla_x \Phi$ are continuous and satisfy the following growth condition:
\bea
|\nabla_x L| , |\nabla_u L|, |\nabla_x \Phi| \leq K(1+|x| + |u|).
\eea
\item[(H4)] The function $h: H \to \rr^d $ is bounded and continuous functions.
\end{itemize}
\end{ass}
Under above assumptions, for any admissible control $u\in \CU_{\text{ad}}$, equation \eqref{sde} has a unique probabilistic strong solution in the variational setting on the Gelfand tripe. Moreover, $X\in L_\CF^2 (0,T;V) \cap L^2(\Omega, C([0,T];H))$. The proof of existence and uniqueness for solution of \eqref{sde} is standard and can be found in Bensoussan \cite[Theorem 1]{Ben83} and \cite{LR15}.

\subsection{A reference example} Let us consider the following controlled stochastic heat equation in the interval $(0,1)$, perturbed by multiplicative noise:
\beq{srde}
\left\{\barray
\disp \frac{\partial X_t}{\partial t} (\xi)=\frac{\partial^2}{\partial \xi^2}X_t(\xi)+f(\xi,X_t(\xi),u_t(\xi))+g(\xi,X_t(\xi), u_t(\xi))\frac{\partial w^\CQ}{\partial t}(t,\xi)\\
X_1(0)=X_t(1)=0,\; t\in [0,T]\\
X_0(\xi)=x_0(\xi),\; \xi\in [0,1],
\earray\right.
\eeq
where $f,g: [0,1]\times \rr\times \rr\to \rr$ are given Borel measurable functions and $\partial w^\CQ/\partial t (t,\xi)$ is a $\CQ$-Wiener process. We assume the mapping $f(\xi,\cdot,\cdot), g(\xi,\cdot,\cdot)$ are of class $C^1$, Lipschitz continuous, uniformly with respect to $\xi\in [0,1]$, and that $f(\cdot,0,0)$ and $g(\xi,0,0)$ are bounded. The set of admissible control actions $\CU$ is a convex subset of $U:=L^2([0,1])$ and we assume that $\CU \subset L^\infty([0,1])$. A control $u$ is a progressively process with values in $\CU$. The cost functional is:
\beq{cost}
J(u)= \EE \int_0^T \int_0^1 \ell(\xi,X_t(\xi), u_t(\xi))d\xi dt + \EE \int_0^1 \phi(\xi, X_T(\xi))d\xi,
\eeq
where $\ell: [0,1]\times \rr \times \rr \to \rr$ and $\phi:[0,1]\times \rr \to \rr$ are given bounded, Borel measurable functions satisfying suitable conditions. The classical control problem for stochastic partial differential equation is to find $u^\star \in \CU[0,T]$ to minimize the cost function $J(u)$, that is,
\bea\ad 
J(u^\star)=\inf_{u\in \CU[0,T]} J(u).
\eea
Let $H:=L^2(0,1)$, the SPDE \eqref{srde} can be reformulated in Hilbertian framework resulting in \eqref{sde}, where $F,G,L,\Phi$ can be defined as the following Nemytskii operators: for $x,y\in H, u\in L^\infty(0,1)$,
\bea
F(x,u)(\xi)\ad = f(\xi, x(\xi), u(\xi)), \quad [G(x,u)y](\xi)= g(\xi, x(\xi), u(\xi)) y(\xi)\\
L(x,u)\ad =\int_0^1 \ell(\xi, x(\xi), u(\xi))d\xi, \quad \Phi(x) =\int_0^1 \phi(\xi, x(\xi))d\xi.
\eea
Now we suppose the state of controlled stochastic heat equation $X_t$ is not completely observable, instead, one observes a function of $X_t$ subject to some noise giving rise to observation process \eqref{Y}. The control problem becomes to optimize \eqref{cost} based on information of $Y$.

\subsection{Stochastic maximum principle}\label{sec:SMP}
In this subsection, we devote to establishing stochastic maximum principles (SMP) for the partially observed control system \eqref{sde}, \eqref{Mt}, and \eqref{Ju}. When there is no observation process, i.e., in the fully observed case, Bensoussan \cite{Ben83} derived an SMP for stochastic evolution equations driven by a cylindrical $\CQ$-Wiener process using a variational approach. Later, Fuhrman et al. \cite{FHT18} extended Pontryagin's maximum principle to the case where the stochastic partial differential equation \eqref{sde} is driven by space-time white noise (i.e., $\CQ=I$) under a mild solution framework. They also established the well-posedness of the associated adjoint backward stochastic partial differential equations (BSPDEs). Notably, in both \cite{Ben83} and \cite{FHT18}, the BSPDE contains a series in the drift coefficients arising from the cylindrical Wiener process in the state process. For further details on SMP for SPDEs, we refer to \cite{FHT13, FHT18, DM13} and the references therein.

For control problems under partial observations, Bensoussan and Viot \cite{BV75} established necessary optimality conditions for linear stochastic distributed parameter systems using a variational approach. They considered a feedback control structure where the function $h$ in \eqref{Y} takes the form $h(X(t))=D(t)X(t)$, with $D\in L^\infty([0,T]; \CL(H;E))$ for a Hilbert space $E$. Later, Ahmed \cite{Ahm96} derived an SMP for the stochastic partial differential equation \eqref{sde} with $G(X(t),u(t))=\sqrt{\CQ}$ subject to the observation process \eqref{Y}, using Zakai equation. However, handling the infinite-dimensional filtering problem in this approach remains significantly challenging and it is difficult to apply corresponding stochastic maximum principle.

We will establish a general maximum principle for SPDEs with partial observations using the Girsanov transformation, under the assumption that the control set is convex. For nonconvex control domains, the stochastic maximum principle becomes significantly more intricate, and we leave it to our subsequent work. The measure transformation technique that we employ is well known in the finite-dimensional setting. We refer to \cite{LT95, Tan98} for details.

Let $u^\star \in \CU_{\text{ad}}$ be the optimal control and $X^\star$ the corresponding optimal trajectory of \eqref{sde}. The goal of this section is to find the necessary conditions that are satisfied by $u^\star$. Let us define the Hamiltonian as follows: 
\beq{Ham}\barray
H(t,X,u,Q,q_1,C,c_2) \ad :=\qv{A X(t) + F(X(t), u(t)), Q(t)}_H + \text{Tr}[q_1^*(t) G(X(t),u(t)) \CQ] \\
\aad\quad + L(X(t),u(t))+ \qv{h(X(t)),c_2(t)}_{\rr^d}
\earray\eeq
where 
\bea\ad 
\text{Tr}[q_1^*(t) G(X(t),u(t))]:=\sum_{i=1}^\infty \qv{G(X(t),u(t)) \CQ e_i, q_1(t)e_i}_{H}
\eea
and $\{e_i\}$ is an orthonormal basis of Hilbert space $H$. We introduce the following backward stochastic partial and ordinary differential equations: 
\beq{BSPDE}\left\{\barray
dQ(t) \ad = -\Big[A^* Q(t)+ \nabla_x^* F(X(t),u(t)) Q(t)+\sum_{i=1}^\infty \nabla_x^*\big[G(X(t), u(t)) \CQ e_i \big] q_1(t)e_i  \\
\aad \qquad +\nabla_x L(X(t),u(t))+ \sum_{k=1}^d \nabla_x^* h^k(X(t))c_2^k(t) \Big] dt + q_1(t) dW(t) + \sum_{k=1}^d q_2^k(t) dB^k(t), \\
dC(t) \ad = - L(X(t),u(t)) dt + c_1(t) dW(t) + \sum_{k=1}^d  c_2^k(t)dB^k(t) \\
Q(t) \ad = \nabla_x \Phi(X(T))\\
 C(t)\ad = \Phi(X(T))
\earray\right.\eeq
where $q_1\cd,q_2^k\cd$ is the martingale representation of $Q\cd$ with respect to $W$ and $B^k$, respectively, and $c_1\cd,c_2^k\cd$ are the martingale representation of $C\cd$ with respect to $W$ and $B^k$, respectively. We have two adjoint backward differential equations in \eqref{BSPDE} because our system state has two components $(X\cd, M\cd$). 

From now on, we adopt the convention of finite summation over repeated indices, so that we will drop the symbol $\sum_{k=1}^d$ in \eqref{BSPDE} and other equations.

Under our assumptions (H1)-(H4), it can be shown that there exist unique $\CF_t$-adapted solutions: $Q\in L^2(\Omega\times [0,T];V)\cap L^2(\Omega; C([0,T];H)), q_1 \in  L^2(\Omega\times [0,T]; \CL_2(H)), q_2^k \in L^2(\Omega\times [0,T]; H)$ and $C \in C([0,T]; \rr)$, $ c_1\in L^2(\Omega\times [0,T];  \CL_2(H;\rr))$, $c_2^k \in L^2(\Omega\times [0,T]; \rr)$. The existence and uniqueness of backward SPDE and SDE in \eqref{BSPDE} can be found in \cite{Ben83, FHT18, LT95}, thus details are omitted. We have the following main result for the stochastic maximum principle of our partially observed control problem.

\begin{thm}\label{thm:SMP}
Let assumptions (H1)-(H4) hold and assume that $u^\star$ is an optimal control and $X^\star$ is the corresponding optimal state. Then for any $v\in \CU_{\text{ad}}$, it is necessary to satisfy
\bea
\ad \bqv{\EE^\QQ [\nabla_u H(t,X^\star(t),u^\star(t),Q^\star(t),q_1^\star(t),C^\star(t), c_2^\star(t))|\CF_t^Y], v-u^\star(t)}
\\
\ad = 
\Bqv{\EE^\QQ \Big[\nabla_u^* F(X^\star(t), u^\star(t)) Q^\star(t) + \nabla_u L(X^\star(t),u^\star(t))\\
\aad \qquad +\sum_{i=1}^\infty \nabla_u^*[G(X^\star(t),u^\star(t)) \CQ e_i]q_1^\star (t)  e_i \Big|\CF_t^Y\Big], v-u^\star(t)} \geq 0,
\eea
where $\{Q^\star,q_1^\star,q_2^\star \}$ and $\{C^\star, c_1^\star, c_2^\star\}$ are solutions of \eqref{BSPDE} with $X\cd,u\cd$ replaced by $X^\star, u^\star$.
Define $\nabla_u H(u^\star):= \nabla_u F(X^\star, u^\star) Q^\star(t) + \nabla_u L(X^\star,u^\star)+\sum_{i=1}^\infty \nabla_u^*[G(X^\star, u^\star)\CQ e_i]q_1^\star e_i$ as the gradient of the Hamiltonian \eqref{Ham} with respect to the optimal control $u^\star$. Then the G\^{a}teaux derivative of the cost function $J$ at $u^\star$ satisfies
\bea
\nabla J(u^\star)= \EE^\QQ[\nabla_u H(u^\star)|\CF_t^Y].
\eea
\end{thm}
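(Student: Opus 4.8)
The plan is to derive the necessary condition by a convex (spike-free) perturbation of the optimal control, combined with the measure-transformation identity for the cost functional \eqref{Ju}. Fix $v \in \CU_{\text{ad}}$ and, for $\rho \in (0,1)$, set $u^\rho(t) := u^\star(t) + \rho(v(t) - u^\star(t))$, which is admissible since $\CU$ is convex. Let $X^\rho$ and $M^\rho$ denote the corresponding solutions of \eqref{sde} and \eqref{Mt}. First I would establish the variational (first-order) expansion: there exists a process $Z$, the solution of the linearized forward SPDE
\beq{lin}
dZ(t) = \big[AZ(t) + \nabla_x F(X^\star,u^\star)Z(t) + \nabla_u F(X^\star,u^\star)(v-u^\star)\big]\,dt + \big[\nabla_x G(X^\star,u^\star)Z(t) + \nabla_u G(X^\star,u^\star)(v-u^\star)\big]\,dW(t),
\eeq
with $Z(0)=0$, such that $\rho^{-1}(X^\rho - X^\star) \to Z$ in $L^2_\CF(0,T;V)\cap L^2(\Omega;C([0,T];H))$ as $\rho \to 0$; this is standard from (H1)--(H2) via energy estimates and Gronwall. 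Simultaneously one linearizes \eqref{Mt}: writing $N$ for the $L^2$-limit of $\rho^{-1}(M^\rho - M^\star)$, $N$ solves the linear SODE $dN(t) = N(t)h(X^\star(t))\,dY(t) + M^\star(t)\,\langle \nabla_x h(X^\star(t))Z(t), dY(t)\rangle$, $N(0)=0$.

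Next, using the second (i.e. $\EE^\PP$) representation in \eqref{Ju}, compute the directional derivative
\bea
\frac{d}{d\rho}J(u^\rho)\Big|_{\rho=0} = \EE^\PP\Big[\int_0^T \big(M^\star \langle \nabla_x L(X^\star,u^\star),Z\rangle + M^\star\langle\nabla_u L(X^\star,u^\star),v-u^\star\rangle + N\,L(X^\star,u^\star)\big)\,dt + M^\star\langle\nabla_x\Phi(X^\star(T)),Z(T)\rangle + N(T)\Phi(X^\star(T))\Big].
\eea
The core of the argument is then to eliminate $Z$ and $N$ from this expression using the adjoint equations \eqref{BSPDE}. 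Concretely, I would apply the Itô product formula to $\langle Z(t), Q^\star(t)\rangle_H$ and to $N(t)C^\star(t)$ on $[0,T]$, under $\PP$, taking expectations. The boundary terms $\EE^\PP\langle Z(T),\nabla_x\Phi(X^\star(T))\rangle$ and $\EE^\PP[N(T)\Phi(X^\star(T))]$ cancel the terminal contributions above; the drift terms of $Q^\star$ and $C^\star$ were designed precisely so that the $\langle\nabla_x L,Z\rangle$, $N L$, and $\langle\nabla_x^* h\cdot c_2^\star, Z\rangle$ contributions are absorbed, while the $q_1^\star$/$G$ cross-terms account for the diffusion in \eqref{lin}. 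Here one must be careful with the cylindrical-Wiener trace terms: the bracket $\langle \nabla_x G(X^\star,u^\star)Z\cdot\CQ e_i, q_1^\star e_i\rangle$ summed over $i$ matches $\sum_i \langle \nabla_x^*[G\CQ e_i]q_1^\star e_i, Z\rangle$ by definition of the adjoint, and the coupling between $N$'s $dY$-driven dynamics and the $c_2^\star \,dB^k$ terms in the $C^\star$-equation produces exactly the $\langle h(X^\star), c_2^\star\rangle$-type cancellation after one recalls $dY = h(X^\star)dt + dB$ under $\PP$. After all cancellations, what survives is
\bea
\frac{d}{d\rho}J(u^\rho)\Big|_{\rho=0} = \EE^\PP\Big[\int_0^T M^\star(t)\,\big\langle \nabla_u^* F(X^\star,u^\star)Q^\star + \nabla_u L(X^\star,u^\star) + \textstyle\sum_i \nabla_u^*[G(X^\star,u^\star)\CQ e_i]q_1^\star e_i,\ v(t)-u^\star(t)\big\rangle\,dt\Big].
\eea

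Finally I would convert back to $\QQ$ and condition on the observation filtration. Since $d\QQ/d\PP|_{\CF_T}=M^\star(T)$ and $M^\star$ is the relevant density process, $\EE^\PP[M^\star(t)\,\xi(t)] = \EE^\QQ[\xi(t)]$ for $\CF_t$-measurable $\xi(t)$ (after the usual localization/boundedness checks using (H4) that make $M^\star$ a genuine martingale). Because $v-u^\star$ is $\CF^Y$-progressively measurable by \eqref{control}, the tower property lets me replace the integrand by its $\EE^\QQ[\,\cdot\,|\CF_t^Y]$-conditional expectation:
\bea
\frac{d}{d\rho}J(u^\rho)\Big|_{\rho=0} = \int_0^T \EE^\QQ\Big[\big\langle \EE^\QQ[\nabla_u H(u^\star)(t)\mid\CF_t^Y],\ v(t)-u^\star(t)\big\rangle\Big]\,dt,
\eea
which is $\langle \nabla J(u^\star), v-u^\star\rangle$ with $\nabla J(u^\star) = \EE^\QQ[\nabla_u H(u^\star)\mid\CF_t^Y]$; optimality of $u^\star$ forces this to be $\ge 0$ for all admissible $v$, giving the stated variational inequality. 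The main obstacle I anticipate is the rigorous Itô product-formula step in the Gelfand triple $V\hookrightarrow H\hookrightarrow V'$ for $\langle Z,Q^\star\rangle$: one needs $Z\in L^2_\CF(0,T;V)$ against $Q^\star$ whose dynamics lives a priori in $V'$, and the infinite series of trace terms from the cylindrical noise must be shown to converge and be interchangeable with expectation — this is where (H2)'s boundedness of $\nabla_x G,\nabla_u G$ and the nuclearity of $\CQ$ are essential, and it is the only place where genuine care (rather than bookkeeping) is required.
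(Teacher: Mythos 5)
Your proposal follows essentially the same route as the paper: a convex perturbation of $u^\star$, the first-order variational equations for both $X$ and $M$, duality via the It\^o product formula with the adjoint pair $(Q^\star,q_1^\star)$ and $(C^\star,c_2^\star)$, and finally Girsanov plus conditioning on $\CF_t^Y$ to obtain the variational inequality. The only (cosmetic) difference is that the paper pairs $C^\star$ with $\Gamma(t)=M^{-1}(t)M_1(t)$, which is a pure stochastic integral against the $\QQ$-Brownian motion $B$ and so makes the cross-variation bookkeeping cleaner than your direct pairing of $N=M_1$ with $C^\star$ under $\PP$; otherwise the two arguments coincide.
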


To prove \thmref{thm:SMP}, we follow the work of Li and Tang \cite{LT95,Tan98} on SMP for thepartially observed optimal control in finite-dimensional space and Bensoussan \cite{Ben83} for the  completely observed optimal control for SPDEs in infinite-dimensional space. We set 
\beq{z}
z:=\begin{pmatrix}
X \\ M
\end{pmatrix},\quad 
z_1:=\begin{pmatrix}
X_1 \\ M_1
\end{pmatrix},
\eeq
and let 
\bea
L(z,u)=ML(X,u), \quad \Phi(z):= M \Phi(X).
\eea
The cost functional \eqref{Ju} can be represented by 
\bea\ad 
J(u)=\EE\Big[\int_0^T L(z(t),u(t))+ \Phi(z(T))\Big].
\eea

The following lemma gives the G\^{a}teaux derivative of $J$ with respect to the control variable.

\begin{lem}\label{lem:grad-J}
The functional $J\cd$ is G\^{a}teaux differentiable and the following formula holds:
\bea\ad 
\frac{d}{d\theta} J(u\cd+\theta v\cd)\Big|_{\theta=0} \\
\aad=\EE^\QQ \Big\{\int_0^T \qv{\nabla_x L(X(t),u(t)),X_1(t)} +\qv{\nabla_u L(X(t),u(t)), v(t)} dt \\
\aad \qquad\qquad + \qv{\nabla_x \Phi(X(T)), X_1(T)} \Big\} \\
\aad\quad + \EE^\QQ \Big\{\int_0^T M^{-1}(t)M_1(t) L(X(t),u(t))dt + M^{-1}(T)M_1(T) \Phi(X(T))\Big\}. 
\eea
where $X_1$ and $M_1$ are solutions of the following equations:
\beq{X1}\barray
dX_1(t) \ad = [A X_1(t)
dt+ \nabla_x F(X(t),u(t)) X_1(t)+ \nabla_u F(X(t),u(t))v(t)] dt\\
\aad\quad + [\nabla_x G(X(t), u(t)) X_1(t)+ \nabla_u G(X(t), u(t)) v(t)]dW(t), \quad X_1(0)=0,
\earray\eeq
and 
\beq{M1-eq}
d M_1 (t)= [ M_1 (t) h(X(t))+ M(t) \nabla_x h(X(t)) X_1(t)] dY(t), \quad M_1(0)=0.
\eeq
The equations \eqref{X1} and \eqref{M1-eq} are first-order variational equation for state \eqref{sde} and \eqref{Mt}.
\end{lem}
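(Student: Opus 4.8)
\textbf{Proof proposal for Lemma~\ref{lem:grad-J}.}
The plan is to compute the directional derivative of $J$ by perturbing the control, controlling the resulting perturbation of the state pair $(X,M)$ to first order, and differentiating the cost functional under the expectation sign. I would first fix $u,v\in\CU_{\text{ad}}$ and, for small $\theta$, let $X^\theta$ denote the solution of \eqref{sde} with control $u+\theta v$ and $M^\theta$ the solution of \eqref{Mt} with state $X^\theta$. Writing $X^\theta = X + \theta X_1 + r^\theta$ and $M^\theta = M + \theta M_1 + \rho^\theta$, the first step is to identify the linearized equations: substituting into \eqref{sde}, using the G\^{a}teaux differentiability of $F$ and $G$ from (H2) and matching the $O(\theta)$ terms yields \eqref{X1}, and doing the same for \eqref{Mt} using the differentiability of $h$ from (H4) yields \eqref{M1-eq}. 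Existence, uniqueness and the a~priori bound $\EE^\QQ[\sup_{t\le T}|X_1(t)|_H^2]<\infty$ for the linear SPDE \eqref{X1} follow from the same variational theory cited after Assumption~\ref{ass} (the coefficients are bounded linear in $X_1$ plus a square-integrable forcing term $\nabla_u F\,v$, and $\nabla_u G\,v$ is a square-integrable diffusion input); the bound for $M_1$ follows since $M_1$ solves a linear SDE driven by $dY$ with coefficients involving the bounded $h$ and $\nabla_x h$ together with $M$ and $X_1$, both of which have the needed moments.

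The second step is the remainder estimate: I would show $\EE^\QQ[\sup_{t\le T}|r^\theta(t)|_H^2] = o(\theta^2)$ and similarly for $\rho^\theta$. This is the standard argument --- write the equation satisfied by $r^\theta$, use the mean-value form of the increments of $F,G,h$ (continuity of the derivatives plus boundedness from (H2),(H4)), apply the energy estimate on the Gelfand triple and Gronwall's inequality, and invoke dominated convergence together with the continuity of $\nabla_x F,\nabla_x G$ at $X$ to conclude the $o$ rate. Care must be taken because $M^\theta$ can be large (it is an exponential), so I would either work directly under $\QQ$, where $Y$ is a Brownian motion and the relevant quantities are $M^{-1}M_1$ rather than $M_1$ itself --- note $d(M^{-1}M_1) = \nabla_x h(X)X_1\,dY$ under $\QQ$ up to the It\^o correction, which is clean --- or keep uniform integrability of $M^\theta$ via the boundedness of $h$ (so that $M^\theta$ has moments of all orders, uniformly in $\theta$). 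This uniform control of the exponential weight against the state perturbation is the step I expect to be the main obstacle; everything else is routine linear-SPDE perturbation analysis.

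The third step is to differentiate the cost. Using the representation $J(u+\theta v) = \EE^\QQ[\int_0^T L(X^\theta,u+\theta v)\,dt + \Phi(X^\theta(T))]$ from \eqref{Ju}, I expand $L(X^\theta,u+\theta v) = L(X,u) + \theta\langle \nabla_x L(X,u),X_1\rangle + \theta\langle \nabla_u L(X,u),v\rangle + o(\theta)$ pointwise, with the $o(\theta)$ controlled in $L^1(\Omega\times[0,T])$ by the growth bound (H3) on $\nabla_x L,\nabla_u L$ (which gives linear growth, hence integrability against the $L^2$ bounds on $X_1$, and moments of $X$) together with the remainder estimate from step two; similarly for $\Phi(X^\theta(T))$ using (H3) on $\nabla_x\Phi$. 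Since $J$ is written as an expectation under $\QQ$, no extra $M$-factor appears in the terms displayed in the first brace of the lemma. However, the lemma is stated with the extra second brace containing $M^{-1}M_1$; to get it one instead keeps $J$ in the $\EE^\PP$ form $\EE[\int_0^T M L(X,u)\,dt + M\Phi(X)]$, differentiates the product $M^\theta L(X^\theta,u+\theta v)$ producing a term $M_1 L(X,u)$ in addition to $M\langle\nabla_x L,X_1\rangle + M\langle\nabla_u L,v\rangle$, and then converts back to $\EE^\QQ$ by dividing through by $M$, which turns $M_1$ into $M^{-1}M_1$ exactly as displayed. I would justify the interchange of $d/d\theta$ and $\EE$ by the uniform integrability established above, conclude that the limit $\theta\to0$ exists and equals the stated expression, and finally note that linearity of the expression in $v$ and its boundedness in $v$ (again via (H3)) give G\^{a}teaux differentiability of $J$, completing the proof.
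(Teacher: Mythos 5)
Your proposal follows essentially the same route as the paper: linearize $(X,M)$ to obtain \eqref{X1}--\eqref{M1-eq}, prove the remainder estimates $\sup_t\EE|\wdt X_\theta(t)|_H^2,\ \sup_t\EE|\wdt M_\theta(t)|^2\to 0$, differentiate the cost in its $\EE^\PP$ form $\EE[\int_0^T M L\,dt+M(T)\Phi]$ so that the product rule produces the $M_1 L$ and $M_1(T)\Phi$ terms, and convert back to $\EE^\QQ$ to get the $M^{-1}M_1$ factors (the paper packages $(X,M)$ into a single augmented state $z$ and cites \cite{Ben83,LT95} for the estimates you outline). One caution on your first framing of the third step: the measure $\QQ$ itself depends on the control through the density $M^\theta(T)$, so differentiating under a fixed $\EE^\QQ$ would miss the second brace entirely --- but you correctly abandon that in favor of the $\EE^\PP$ computation, which is exactly what the paper does.
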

\begin{proof}
For any $\theta\in (0,1)$ and $v\in \CU_{\text{ad}}$. The convexity of $\CU_{\text{ad}}$ implies that $u+\theta v\in \CU_{\text{ad}}$. Let $X_\theta\cd$ and $M_\theta\cd$ be the trajectory of \eqref{sde} and \eqref{Mt} corresponding to control $u\cd+\theta v\cd$. Define $z_\theta, z_1$ as in \eqref{z} using $X_\theta$ and $M_\theta$. Denote by $\wdt X_\theta(t) = (X_\theta(t)-X(t))/\theta - X_1(t)$, where $X_1\cd$ is the solution of \eqref{X1} and denote by $\wdt M_\theta= (M_\theta(t)-M(t))/\theta - M_1(t)$, where $M_1 \cd$ satisfies \eqref{M1-eq}. Then one can show that as $\theta \to 0$,
\beq{wdt-XM}
\sup_{t\in [0,T]}\EE|\wdt X_\theta(t)|_H^2 \to 0 \text{ and } \sup_{t\in [0,T]} \EE |\wdt M_\theta(t)|^2 \to 0.
\eeq
Moreover, we have
\beq{dJ}\barray\ad 
\frac{J(u\cd+\theta v\cd)-J(u\cd)}{\theta}\\
\aad = \frac{1}{\theta}\Big\{\EE \int_0^T L(z_\theta(t),u(t)+\theta v(t))-L(z(t),u(t))dt +\EE[ \Phi(z_\theta(T)) -\Phi(z(T))] \Big\} \\
\aad =  \EE\int_0^T \int_0^1 \qv{ \nabla_z L(z(t)+\la(z_\theta(t)-z(t)), u(t) + \la \theta v(t)), z_1(t)+\wdt z_\theta(t)}  \\
\aad\qquad\qquad\quad + \qv{\nabla_u L(z(t)+\la(z_\theta(t)-z(t)), u(t)+\la \theta v(t)), v(t)} \, d\la dt \\
\aad\quad + \EE \qv{\nabla_z \Phi(z_\theta(T)+\la(z_\theta(T)-z(T))), z_1(T)+\wdt z_\theta(T)}.
\earray\eeq
For details of \eqref{wdt-XM} and \eqref{dJ}, we refer to \cite[Lemma 2.1]{Ben83} and \cite[Lemma 3.2, 3.3]{LT95}.
Letting $\theta \to 0$, we can have
\bea\ad 
\frac{d}{d\theta} J(u\cd + \theta v\cd)\Big|_{\theta=0}\\
\aad =\EE \int_0^T \qv{\nabla_z L(z(t),u(t)), z_1(t)} + \qv{\nabla_u L (z(t), u(t)), v(t)} dt +\qv{\EE \nabla_z \Phi(z(T)), z_1(T)}  \\
\aad = \EE \int_0^T \big\{\qv{M(t) \nabla_x L(X(t),u(t)), X_1(t)}  + M_1(t) L(X(t),u(t)) \\
\aad\qquad\qquad +\qv{M(t) \nabla_u L(X(t),u(t)), v(t)} \big\} dt \\
\aad \quad + \EE \qv{M(T)\nabla_x \Phi(X(T)),X_1(T)} + \EE M_1(T)\Phi(X(T))
\eea
Consequently,
\beq{grad-J}\barray\ad 
\frac{d}{d\theta} J(u\cd + \theta v\cd)\Big|_{\theta=0} \\
\aad = \EE^\QQ\Big\{ \int_0^T \qv{\nabla_x L(X(t), u(t)), X_1(t)} + \qv{\nabla_u L(X(t), u(t)), v(t)}  dt \\
\aad \qquad \qquad  +\EE^\QQ \qv{\nabla_x \Phi(X(T)), X_1(T)}  \Big\} \\
\aad\quad + \EE^\QQ \Big\{\int_0^T M^{-1}(t) M_1(t) L(X(t), u(t))dt +  M^{-1}(T) M_1(T)\Phi(X(T))\Big\}
\earray\eeq
The proof is completed.
\end{proof}

\begin{proof}[Proof of \thmref{thm:SMP}]
By It\^{o} formula, we can verify that the solution of \eqref{M1-eq} have the following explicit expressions (see \cite{BEK89}):
\bea\ad 
M_1(t)= M(t) \int_0^t \nabla_x h(X(t)) X_1(t) dB(t). 
\eea
Let $\Ga(t)=M^{-1}(t)M_1(t)$, then we have 
\beq{Ga-t}
\Ga(t)=\int_0^t \nabla_x h(X(t)) X_1(t)dB(t).
\eeq
In particular, we have $\Ga(0)=0, \Ga(T)=M^{-1}(T)M_1(T)$. Applying It\^{o} formula to $C(t)\Ga(t)$, we obtain
\bea\ad 
\EE^\QQ C(T)\Ga(T)=\EE^\QQ[C(0)\Ga(0)]+\EE^\QQ \int_0^T -L(X(t),u(t))\Ga(t)+ \qv{\nabla_x^* h^k(X(t))c_2^k(t),X_1(t)} dt,
\eea
where $h\cd=(h^1\cd, \dots, h^k\cd)$. It implies that
\beq{M1}\barray\ad 
\EE^\QQ \Big[M^{-1}(T)M_1(T)\Phi(X(T))+ \int_0^T M^{-1}(t)M_1(t)L(X(t),u(t)) \Big]\\
\aad= \EE^\QQ \int_0^T \qv{\nabla_x^* h^k(X(t))c_2^k(t),X_1(t)} dt
\earray\eeq
Applying It\^{o} formula to $\qv{X_1(t),Q(t)} $ and taking integration and  expectation, we obtain
\beq{X1-P}\barray\ad
\EE^\QQ [\nabla_x \Phi(X(T)) X_1(T)] \\
\ad =\EE^\QQ \qv{Q(T),X_1(T)} \\
\ad = \EE^\QQ \int_0^T - \qv{ \nabla_x L(X(t), u(t)), X_1(t)}_H - \qv{\nabla_x^* h^k(X(t)) c_2^k(t), X_1(t)} dt \\
\aad + \EE^\QQ \int_0^T \qv{\nabla_u F(X(t),u(t)) v(t),Q(t)} + \text{Tr}[q_1^*(t) \nabla_u G(X(t),u(t)) \CQ v(t)] dt
\earray\eeq
Combining \eqref{grad-J}, \eqref{M1}, and \eqref{X1-P}, we get
\beq{re-grad-J}\barray\ad 
\frac{d}{d\theta} J(u\cd +\theta v\cd)\Big|_{\theta=0} \\
\aad = \EE^\QQ \int_0^T \Big\{ \qv{\nabla_u L(X(t),u(t)),v(t)}_H +\qv{\nabla_u F(X(t),u(t))v(t),Q(t)} \\
\aad \qquad \qquad +\text{Tr}[\nabla_u G(X(t),u(t)) v(t) q_1(t)] \Big\} dt\\
\aad =\EE^\QQ \int_0^T \qv{\nabla_u^* F(X(t),u(t))Q(t)+\nabla_u L(X(t),u(t)) \\
\aad\qquad \qquad +\sum_{i=1}^\infty \nabla_u^* [ G(X(t),u(t))e_i]q_1(t)e_i, v(t)} \, dt
\earray\eeq
Let $u^\star$ be the optimal control which is in the interior of $U$, then for any $v\in \CU_{\text{ad}}$, \eqref{re-grad-J} implies
\bea\ad 
\EE^\QQ\int_0^T \qv{\nabla_u^* F(X^\star(t),u^\star(t))Q^\star(t)+ \nabla_u L(X^\star(t),u^\star(t)) \\
\aad \qquad\qquad +\sum_{i=1}^\infty \nabla_u^*[G(X^\star(t),u^\star(t))e_i] q_1^\star(t)e_i, v-u^\star} dt \geq 0,
\eea
where $(Q^\star, q_1^\star, q_2^{k,\star})$ are solutions of \eqref{BSPDE} with $(X(t),u(t))$ replaced by $(X^\star(t), u^\star(t))$. Since the control variable must be adapted to observation filtration $\CF_t^Y$, we have for a.s. $t \in [0,T]$,
\beq{nabla-J}\barray
\nabla J(u^\star)\ad :=\EE\Big[\nabla_u^* F(X^\star(t), u^\star(t)) Q(t)+\nabla_u L(X^\star_t,u^\star_t) \\ 
\aad\qquad \quad + \sum_{i=1}^\infty \nabla_u^* [G(X^\star, u^\star(t))e_i]q_1^\star(t) e_i \Big| \CF_t^Y \Big] = 0.
\earray\eeq
We finish the proof of stochastic maximum principle in \thmref{thm:SMP}.
\end{proof}

In what follows, we apply the stochastic maximum principle established in \thmref{thm:SMP} to develop numerical algorithms to approximate the partially observed optimal control. In \secref{sec:fe}, we use the finite element method for spatial discretization and the implicit Euler method for temporal discretization to obtain numerical schemes for the forward-backward SPDE system \eqref{sde} and \eqref{BSPDE}. Then, in \secref{sec:alg}, we construct a stochastic gradient descent algorithm, combined with a particle filtering approach, to approximate the optimal control.

\section{Spatial-time discretization of forward-backward SPDEs}\label{sec:fe}
In this section, we focus on discretizing forward-backward SPDEs \eqref{sde} and \eqref{BSPDE} in both space and time to obtain their numerical solutions. From \secref{sec:SMP}, it follows that the gradient of the cost functional $J$ with respect to the control process on the time interval $t\in [0,T]$ is given by \eqref{grad-J}, where the corresponding adjoint backward stochastic partial differential equations are given by \eqref{BSPDE}. In practical applications, controlling the diffusion term is often challenging. Therefore, we consider a setting where the diffusion coefficients of the state of the system are independent of the control variable, that is, $G(X(t), u(t))= G(X(t))$. Additionally, we assume that the control variable appears in additive form, meaning that the drift function takes the structure $F(X(t),u(t)):=F(X(t))+u(t)$, where $F: H \to H$ is a non-linear operator. For simplicity, we take the second-order differential operator $A=\Dl$ for the rest of the paper.

Under this setting, the gradient of the cost functional in \eqref{nabla-J} simplifies to
\beq{nabla-J1}
\nabla J(u^\star)=\EE \Big[\nabla_u^* F(X^\star,u^\star) Q^\star(t)+\nabla_u L(X^\star, u^\star)\Big| \CF_t^Y\Big].
\eeq

Moreover, in \secref{sec:SMP}, we observed that the backward stochastic partial differential equations satisfied by $Q(t)$ in \eqref{BSPDE} contains a series in the drift term. From a computational perspective, we consider system state \eqref{sde} driven either by an additive cylindrical Wiener process or by finitely many Brownian motions with multiplicative noise. In the case of additive cylindrical Wiener process, the series term in \eqref{BSPDE} vanishes, simplifying computations. On the other hand,  when the noise is multiplicative and driven by finitely many Brownian motions, the series reduces to a finite summation, making it computational tractable.

For the remainder of the paper, we formulate a numerical algorithm under the assumption that the system state is driven by finitely many Brownian motions with multiplicative noise, as described below:
\beq{x-spde} 
dX(t)= [\Dl X(t)+F(X(t))+u(t)]dt +\sum_{i=1}^N G_i(X(t))
dW^i(t)
\eeq
where $W^i=\qv{W,e_i}_H$ are the standard Brownian motions in $\rr$ and $G_i(X(t)):=G(X(t)) \CQ e_i$ with $\{e_i\}$ is an orthonormal basis of $H$. If we write $G(X(t))dW(t)$ as $\sum_{i=1}^\infty G(X(t))e_i dW^i(t)$, then the driving noise in \eqref{x-spde} can be viewed as a truncation of the noise $G(X(t))dW(t)$ in \eqref{sde}. The parameter $N$ represents the number of finitely many Brownian motions driving $X$. The case where the system is driven by an additive cylindrical Wiener process is simpler and can be understood analogously to a system driven by finitely many Brownian motions. 

Under this framework, the backward SPDE and SDE\eqref{BSPDE} take the form:
\beq{BSPDE-Q}\left\{\barray
dQ(t)\ad = -\Big[\Dl Q(t)+ \nabla_x^* F(X(t)) Q(t)+\nabla_x^* G_i(X(t)) q_1^i(t)\\
\aad \qquad + \nabla_x L(X(t),u(t))+\nabla_x^* h^k(X(t)) c_2^k(t)\Big]+ q_1^i(t) dW^i(t)+ q_2^k(t)dB^k(t) \\
Q(T)\ad = \nabla_x \Phi(X(T))
\earray\right.\eeq
and 
\beq{BSDE-C}\left\{\barray
dC(t) \ad = - L(X(t),u(t))dt + c_1^i(t)dW^i(t)+ c_2^k(t) dB^k(t) \\
C(T) \ad = \Phi(X(T))
\earray\right.\eeq
where $q_1^i :=q_1 e_i\in H$ and $c_1^i := c_1 e_i \in \rr$. In the above, we did not write out the summation $\sum_{i=1}^N$ as a convention.

To proceed, we discretize the forward-backward SPDEs \eqref{x-spde} and \eqref{BSPDE-Q} in space to obtain their numerical solutions. We note that \eqref{BSDE-C} is a backward stochastic differential equation, which does not involve spatial variables, and thus does not require spatial discretization.

\subsection{Spatial discretization for FB-SPDEs}
We consider the finite element space $S_h \subset H^1$ satisfying 
\bea
|P_h X- X|_H \to 0, \quad h \to 0
\eea
where $P_h: H \to S_h$ is the $L^2$-orthogonal projection onto $S_h$ defined by $\qv{P_h X-X, \phi_h} =0$ for all $\phi_h \in S_h$, and $\CR_h: H^1 \to S_h$ is the Ritz-orthogonal projection defined by $\qv{\nabla [\CR_h X -X], \nabla \phi_h}$=0 for all $\phi_h \in S_h$. The discrete Laplace operator $\Dl_h: S_h \to S_h$ is defined by $-\qv{\Dl_h X_h, \phi_h} =\qv{\nabla_h X_h, \nabla_h \phi_h}$ for all $\phi_h, X_h \in S_h$. We can introduce the following finite element partially observed control (FEPOC) problem. \newline

\noindent{\textbf{Problem (FEPOC):}} Let $h\in  (0,1)$. Minimize
\bea\ad 
J_h (u): = \EE^\QQ \Big[\int_0^T L(X_h(t), u_h(t)) dt \Big]+\EE \Phi(X_h(T)),
\eea
over the set $\CU_{ad}$ subject to the following partially-observed spatial-discretized SPDE:
\beq{dis-state}
\left\{\barray
d {X}_h(t) = [\Dl_h X_h(t)+ F(X_h(t))+ u_h(t) ] dt + G_i(X_h (t)) dW^i(t)\\
dY_h(t)= h(X_h(t))dt + dB(t),\\
X_h(0)= P_h X_0\in S_h, \, Y_0=0.
\earray\right.
\eeq
where $X_h=P_h X, u_h:= P_h u \in S_h$.

Let $\{Q_h\cd$, $q_{1,h}^i\cd, q_{2,h}^k\cd\}$ and $\{C_h\cd, c_{1,h}^i\cd, c_{2,h}^k\cd\}$ satisfy the following spatial-discretized backward SPDE and SDE, respectively,
\beq{fb-spde}\left\{\barray
d Q_h(t) \ad = -\Big[\Dl_h Q_h(t) + \nabla_x^* F(X_h(t)) Q_h(t)+ \nabla_x^* G_i(X_h(t)) q_{1,h}^i(t) \\
\aad \qquad +  \nabla_x L(X_h(t), u_h(t)) +  \nabla_x^* h(X_h(t)) c_{2,h}(t) \Big]dt \\
\aad \quad + q_{1,h}^i(t) dW^i(t) + q_{2,h}^k(t) dB^k(t), \\
d {C}_h(t) \ad = - L(X_h(t), u_h(t)) dt + c_{1,h}^i dW^i(t)+ c_{2,h}^k(t)dB^k(t) \\
Q_h(T) \ad = P_h[\nabla_x \Phi(X_h(T))],\quad  C_h(T)  = \Phi(X_h(T))
\earray\right. 
\eeq
where $q_{1,h}^i \in S_h$ and $q_{2,h}^k \in  S_h$ for each $i=1,\dots, N$ and $k=1,\dots, d$.  Here $\{C_h, c_{1,h}^i, c_{2,h}^k\}$ is the solution of backward SDE \eqref{BSDE-C} corresponding to spatial-discretized solution $X_h$ and $u_h$. The system \eqref{fb-spde} may be interpreted as the finite element discretization of first order optimality system of the partially observed stochastic optimal control problem \eqref{sde}-\eqref{Ju}. 

The following theorem asserts the convergence for the solution $(X_h, Q_h, q_{1,h}^i, q_{2,h}^k)$ toward $(X, Q, q_{1}^i , q_{2}^k)$. For this purpose, we consider the Banach space 
\bea
\CN[0,T]\ad := [L^2_\CF([0,T];V) \cap L^2(\Omega; C([0,T];H))]^2 \\
\aad\quad \times  L^2(\Omega \times [0,T]; \CL_2(H)) \times L^2(\Omega\times [0,T]; H),
\eea
endowed with the norm
\bea
\|(X,Q,q_1^i, q_2^k)\|_{\CN[0,T]}
\ad := \EE\Big[\sup_{t\in [0,T]} |X(t)|_H^2 + \sup_{t\in [0,T]} |Q(t)|_H^2 \Big] \\
\aad + \EE \int_0^T \Big( \|X(t)\|_V^2 + \| Q(t)\|_V^2+  \sum_{i=1}^N |q_1^i(t)|_{H}^2 + \sum_{k=1}^d |q_2^k (t)|_{H}^2 \Big) dt.
\eea

\begin{thm}\label{thm:conv}
Let $(X,Q, q_{1}^i, q_2^k)$ solve the forward-backward SPDE \eqref{sde}, \eqref{BSPDE-Q}, and $(X_h,Q_h,\\
q_{1,h}^i, q_{2,h}^k)$ be the solution of spatial discretized FB-SPDEs in \eqref{dis-state} and \eqref{fb-spde}. Then for fixed time horizon $T>0$, we have
\bea
\|(X,Q, q_{1}^i, q_2^k)- (X_h,Q_h, q_{1,h}^i, q_{2,h}^k) \|_{\CN[0,T]}^2 \to 0, \quad \text{ as } h \to 0.
\eea
\end{thm}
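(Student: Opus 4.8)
The plan is to decompose the error for the forward equation and the backward system separately, and then close a Gronwall-type estimate that couples the two. For the forward SPDE \eqref{x-spde} versus its finite element counterpart \eqref{dis-state}, I would write $X - X_h = (X - \CR_h X) + (\CR_h X - X_h)$, where $\CR_h$ is the Ritz projection. The first term is controlled purely by the approximation properties of $S_h$ (so it tends to $0$ in the $\CN[0,T]$-norm by the assumed convergence $|P_h X - X|_H \to 0$ together with standard $H^1$-elliptic projection estimates). For $\rho_h := \CR_h X - X_h$, I would apply It\^o's formula to $|\rho_h(t)|_H^2$, using the identity $\qv{\Dl(\CR_h X), \phi_h} = \qv{\Dl_h(\CR_h X), \phi_h} = -\qv{\nabla \CR_h X, \nabla \phi_h}$ to turn the leading-order term into $-\qv{\nabla_h \rho_h, \nabla_h \rho_h} = -\|\rho_h\|^2$ (up to Ritz-projection errors), then use the global Lipschitz continuity of $F, G_i$ from (H2) and the Burkholder--Davis--Gundy inequality on the stochastic integral term. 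This yields
\begin{equation*}
\EE\Big[\sup_{t \le T}|\rho_h(t)|_H^2\Big] + \EE\int_0^T \|\rho_h(t)\|^2\,dt \le K \EE\int_0^T \Big(|\rho_h(t)|_H^2 + \text{(projection errors for }X, F, G_i)\Big)\,dt,
\end{equation*}
and Gronwall's lemma closes the forward estimate, giving $\|X - X_h\|$-convergence in the relevant norm.

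**Backward system.** Next I would treat $Q - Q_h$ similarly, writing $Q - Q_h = (Q - \CR_h Q) + (\CR_h Q - Q_h) =: \theta_h + \psi_h$ and likewise $q_1^i - q_{1,h}^i$, $q_2^k - q_{2,h}^k$. Since \eqref{BSPDE-Q} is a backward SPDE with terminal data $\nabla_x\Phi(X(T))$, I would apply It\^o's formula to $|\psi_h(t)|_H^2$ on $[t,T]$, running backward in time. The martingale representation terms produce, after taking expectations, the crucial coercive contributions $\EE\int_t^T (\sum_i |q_1^i - q_{1,h}^i|_H^2 + \sum_k |q_2^k - q_{2,h}^k|_H^2)\,dt$ on the left; the drift of \eqref{BSPDE-Q} contributes terms involving $\nabla_x^* F$, $\nabla_x^* G_i$, $\nabla_x L$, $\nabla_x^* h^k$, all of which are bounded by (H2)--(H4). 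The terminal-data discrepancy $\nabla_x\Phi(X(T)) - P_h[\nabla_x\Phi(X_h(T))]$ is controlled by the already-established forward convergence plus continuity of $\nabla_x\Phi$ (H3) and projection error. One then obtains a backward Gronwall inequality
\begin{equation*}
\EE\Big[\sup_{t\le T}|\psi_h(t)|_H^2\Big] + \EE\int_0^T\Big(\|\psi_h\|^2 + \textstyle\sum_i |q_1^i - q_{1,h}^i|_H^2 + \sum_k |q_2^k - q_{2,h}^k|_H^2\Big)dt \le K\,\EE\int_0^T |\psi_h|_H^2\,dt + (\text{terms} \to 0),
\end{equation*}
and the backward Gronwall lemma finishes it. The projection errors $\theta_h$, $Q - \CR_h Q$, etc. all vanish as $h \to 0$ by the approximation property of $S_h$ and the regularity $Q \in L^2_\CF([0,T];V)$ guaranteed by the well-posedness stated before Theorem~\ref{thm:SMP}.

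**Main obstacle.** The delicate point is the coupling: the backward estimate for $Q_h$ depends on $X_h$ through $\nabla_x^* F(X_h), \nabla_x^* G_i(X_h), \nabla_x L(X_h, u_h), \nabla_x^* h(X_h)$ and through the terminal value $\nabla_x\Phi(X_h(T))$, so one must substitute the already-proven forward convergence rate of $X_h$ and handle the difference $\nabla_x^* F(X) Q - \nabla_x^* F(X_h) Q_h$ by splitting it as $[\nabla_x^* F(X) - \nabla_x^* F(X_h)]Q + \nabla_x^* F(X_h)(Q - Q_h)$; the first bracket is small because $X - X_h \to 0$ and $\nabla_x F$ is continuous and bounded (H2), using also $Q \in L^2(\Omega; C([0,T];H))$ to control the product, while the second feeds into the Gronwall term. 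Since the state equation is decoupled from the adjoint equations in this forward-backward system (the control $u_h$ is fixed within the discretization argument, not being optimized here), no fixed-point argument is needed — the forward estimate can be closed first and then fed into the backward one. The other technical care is ensuring the discrete Laplacian term $\qv{\Dl_h \psi_h, \psi_h} = -\|\nabla_h \psi_h\|^2$ provides genuine coercivity uniformly in $h$, which follows from (H1) transferred to $S_h \subset V$; combining the two convergences then gives the claimed $\CN[0,T]$-norm convergence.
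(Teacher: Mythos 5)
You should first be aware that the paper does not actually prove this theorem: it simply defers to Bensoussan \cite{Ben83} and to Dunst--Prohl \cite{DP16}, so there is no in-paper argument to compare yours against. Your outline is the standard strategy that \cite{DP16} carries out for the forward--backward stochastic heat equation (Ritz/elliptic-projection splitting, It\^{o} formula on the discrete error, Burkholder--Davis--Gundy, forward Gronwall for $X-X_h$, then a backward Gronwall for the adjoint in which theIt\^{o} isometry supplies the coercive $\EE\int(\sum_i|q_1^i-q_{1,h}^i|_H^2+\sum_k|q_2^k-q_{2,h}^k|_H^2)\,dt$ on the left), and your observation that the system is decoupled --- forward estimate first, then fed into the backward one, no fixed point needed --- is the right structural point.

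There is, however, one concrete gap in your backward step. The drift of \eqref{BSPDE-Q} contains the term $\nabla_x^* h^k(X(t))\,c_2^k(t)$, where $c_2^k$ is the martingale integrand of the \emph{auxiliary} BSDE \eqref{BSDE-C}, and the discretized equation \eqref{fb-spde} correspondingly contains $\nabla_x^* h(X_h)\,c_{2,h}^k$ with $c_{2,h}^k$ coming from the BSDE driven by $X_h,u_h$. Your Gronwall inequality for $\psi_h=\CR_h Q-Q_h$ therefore has a source term involving $c_2^k-c_{2,h}^k$ that you list among the ``bounded by (H2)--(H4)'' contributions but never estimate. This difference is not controlled by the forward error alone; you need an intermediate stability estimate for the (finite-dimensional, hence easier) BSDE \eqref{BSDE-C}, showing $\EE\int_0^T\sum_k|c_2^k(t)-c_{2,h}^k(t)|^2\,dt\to 0$ as a consequence of $X-X_h\to 0$ and the Lipschitz/growth conditions on $L$ and $\Phi$, carried out \emph{between} your forward and backward steps. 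With that inserted, and with the routine care you already flag about the cross terms between $\theta_h$ and the discrete martingale integrands when extracting the coercive quadratic-variation terms, the argument closes as you describe.
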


\begin{proof}
The proof of the convergence for finite element approximation of $(X,Q,q_1^i, q_2^k)$  in our problem can be found in \cite{Ben83}, thus we omit details. We also refer to the work of \cite{DP16} for convergence analysis of spatial discretization for forward-backward stochastic partial differential equations.
\end{proof}
\begin{rem}
With the convergence established in \thmref{thm:conv}, we can also establish the convergence for approximated cost $J_h\cd$ and $\{C_h\cd, c_{1,h}^i \cd, c_{2,h}^k \cd\}$ to $J\cd$ and $\{C\cd, c_1^i\cd, c_2^k\cd\}$ under their corresponding norms, respectively, as $h\to  0$.
\end{rem}

\subsection{Time discretization for spatial-discretized FBSPDE}\label{sec:num-FBSPDE}

To analyze the numerical approximation of the controlled spatial-discretized FBSPDE system \eqref{dis-state} and \eqref{fb-spde}, we consider its strong solution in the weak formulation. Suppose that $\Psi= \nabla_x^* \Phi (X_h(T))$, where the $S_h$-valued process ${X_h(t), t\in [0,T]}$ satisfies a spatially discretized forward SPDE driven by ${W^i}$. The corresponding time-discretized sequence is denoted by ${X_h(t_j): j=0, 1,\dots, N_T}$. To avoid the restrictive mesh constraint $\kappa \leq K h^2$, where $\kappa=t_{j+1}-t_j$ represents the uniform time step for the partition $\{t_j\}_{j=0}^{N_T}$ of $[0,T]$, we adopt an implicit Euler approximation scheme for both the forward and backward SPDEs.

Let us consider the controlled spatial-discretized FBSPDEs in \eqref{dis-state}-\eqref{fb-spde} on time interval $[t_j, t_{j+1}]$. For any $\phi_h\in S_h$, we have 
\beq{SDE}\barray
\qv{ X_h(t_{j+1}), \phi_h} \ad =\qv{X_h(t_j), \phi_h}-\int_{t_j}^{t_{j+1}} \qv{\nabla X_h(s), \nabla \phi_h} ds +\int_{t_j}^{t_{j+1}} \qv{F(X_h(s)), \phi_h} ds \\
\aad + \int_{t_j}^{t_{j+1}} \qv{ u_h(s), \phi_h} ds + \sum_{i=1}^N \int_{t_j}^{t_{j+1}} \qv{G_i(X_h(s)) dW^i(s), \phi_h},
\earray\eeq
and 
\beq{int-BSPDE-Q}\barray
\qv{Q_h(t_j),\phi_h} \ad = \qv{Q_h(t_{j+1}), \phi_h} -\int_{t_j}^{t_{j+1}} \qv{\nabla Q_h(s), \nabla \phi_h}ds \\
\aad + \int_{t_j}^{t_{j+1}}  \qv{\nabla_x^* F(X_h(s)) Q_h(s),\phi_h}ds  + \sum_{i=1}^N  \int_{t_j}^{t_{j+1}} \qv{\nabla_x^* G_i(X_h(s))q_{1,h}^i(s), \phi_h}ds \\
\aad + \int_{t_j}^{t_{j+1}}
\qv{\nabla_x L(X_h(s), u_h(s)), \phi_h}ds + \sum_{k=1}^d \int_{t_j}^{t_{j+1}} \qv{\nabla_x^* h^k(X_h(s)) c_{2,h}^k(s), \phi_h} ds \\
\aad - \sum_{i=1}^N \int_{t_j}^{t_{j+1}} \qv{q_{1,h}^i(s), \phi_h} dW^i(s) -\sum_{k=1}^d \int_{t_j}^{t_{j+1}} \qv{q_{2,h}^k(s), \phi_h} dB^k(s).
\earray\eeq
Similarly, for $C_h$ in \eqref{fb-spde}, we have 
\beq{int-BSDE-C}\barray
C_h(t_j)\ad = C_h(t_{j+1})+\int_{t_j}^{t_{j+1}} L(X_h(s), u_h(s))ds \\
\aad\quad -\sum_{i=1}^N \int_{t_j}^{t_{j+1}} c_{1,h}^i (s) dW^i (s) -\sum_{k=1}^d \int_{t_j}^{t_{j+1}} c_{2,h}^k(s)dB^k(s).
\earray\eeq
% Here $c_{2,h}=(c_{2,h}^1,\dots, c_{2,h}^d) \in \rr^d$ and $B=(B^1,\dots, B^d)$, where $B^i$ is $\rr$-valued standard Brownian motion. 
\beq{sim-X}\barray\ad 
\qv{X_h(t_{j+1}),\phi_h}+ \Dl t_j \qv{\nabla X_h(t_{j+1}), \nabla \phi_h} \\
\aad = \qv{X_h(t_j), \phi_h} + \qv{F(X_h(t_j)),\phi_h} \Dl t_j \\
\aad\quad + \qv{u_h(t_j), \phi_h} \Dl t_j + \sum_{i=1}^N \qv{G_i(X_h(t_j)),\phi_h}  \Dl_j W^i,
\earray\eeq
where $\Dl t_j= t_{j+1}-t_j$ and $\Dl_j W^i=W^i(t_{j+1})- W^i(t_j)$. The initial condition is $X_h(0)=P_h X_0$. 

For equation \eqref{int-BSPDE-Q}, which is a backward SDE after spatial discretization, we will obtain the following implicit Euler scheme: 
\begin{itemize}
\item[(i)] Set $Q_h(T)= P_h [\nabla_x^* \Phi(X_h(T))]$.
\item[(ii)] For every $j=N_T-1,N_T-2,\cdots,0$, simulate the $S_h$-valued random variables $q_{1,h}^{i}(t_j)$ and $Q_h(t_j)$ such that for all $\phi_h \in S_h$,
\beq{sim-q1} 
\qv{q_{1,h}^i(t_j),\phi_h}=\frac{1}{\Dl t_j} \EE \Big[\Dl_j W^i \bqv{Q_h(t_{j+1}), \phi_h} \Big| \CF_{t_j}\Big]
\eeq
where $\Dl_j W^i= W^i(t_{j+1})- W^i(t_j), i=1,\cdots,N$, and
\beq{sim-Q}\barray\ad 
\qv{Q_h(t_j), \phi_h}+ \Dl t_j \qv{\nabla Q_h(t_j), \nabla \phi_h} \\
\aad = \EE \Big[\qv{Q_h(t_{j+1}), \phi_h} + \Dl t_j \qv{\nabla_x^* F(X_h(t_j)) Q_h(t_{j+1}), \phi_h} \Big| \CF_{t_j}\Big]\\
\aad\qquad + \sum_{i=1}^N \Dl t_j \qv{\nabla_x^* G_i(X_h(t_j)) q_{1,h}^i(t_j), \phi_h} \\
\aad\qquad + \Dl t_j \qv{\nabla_x L(X_h(t_j), u_h(t_j)), \phi_h} + \sum_{k=1}^d \Dl t_j \qv{ \nabla_x^* h^k(X_h(t_j))c_{2,h}^k(t_j), \phi_h}.
\earray\eeq
\end{itemize}

We note that it is not necessary to compute $q_{2,h}^k$, which represents the martingale term of $Q_h$ with respect to Brownian motion $B^k$, $k=1,\dots, d$, since $q_{2,h}^k$ does not appear in the gradient \eqref{grad-J}. Equations \eqref{sim-q1} and \eqref{sim-Q} can be interpreted as projections of the solution onto the available information at each step while moving backward in time. For details on the Monte Carlo simulation of backward SDEs, we refer to the work of Bouchard and Touzi \cite{BT04} and references therein.

Similarly, to solve \eqref{int-BSDE-C} backward numerically, we will have 
\begin{itemize}
\item[(i)] Set $C_h(T)= \Phi(X_h(T))$.
\item[(ii)] For $j=N_T-1 , N_T-2, \cdots, 0$, simulate real-valued random variables  $c_{2,h}^k(t_j)$ and $C_h(t_j)$ such that 
\beq{sim-c2}
c_{2,h}^k (t_j)= \frac{1}{\Dl t_j} \EE \big[\Dl_j B^k C_h(t_{j+1}) | \CF_{t_j} \big],
\eeq
and 
\beq{sim-C}
C_h(t_j) = \EE[C_h(t_{j+1})| \CF_{t_j}] + \Dl t_j L(X_h(t_j), u_h(t_j)),
\eeq
where $\Dl_j B^k = (B^k(t_{j+1})- B^k(t_j))$ for $k=1,2,\cdots, d$.
\end{itemize}

We compute $c_{2,h}^k$ which represents the martingale term $C_h$ with respect to $B^k, k=1, \dots, d$, as it is required for calculating $Q_h(t_j)$ in \eqref{sim-Q}. However, computing $c_{1,h}^i, i=1,\dots, N,$ is not necessary.

For scheme \eqref{sim-X}-\eqref{sim-Q}, it is possible to reformulate the numerical scheme in algebraic form. For $\ell=1,\dots, \mathfrak{L}$, let $\phi_h^\ell \in S_h$ be the basis functions of $S_h$. Consider
\bea X_h(t_{j},\xi)\ad = \sum_{\ell=1}^\fL [\bold{X}_h(t_j)]_\ell \phi_h^\ell(\xi),\quad Q_h(t_j,\xi)= \sum_{\ell=1}^\fL [\bold{Q}_h(t_j)]_\ell \phi_h^\ell(\xi), \\
q_{1,h}^i(t_j,\xi)\ad =\sum_{\ell=1}^\fL [\bold{q}_{1,h}^i (t_j,\xi)]_\ell  \phi_h^\ell(\xi), \quad u_h(t_j,\xi) = \sum_{\ell=1}^\fL [\bold{u}_h(t_j)]_\ell \phi_h^\ell(\xi),
\eea
with coefficient vectors $\bold{X}_h(t_j), \bold{Q}_h(t_j), \bold{q}_{1,h}^i(t_j), \bold{u}_{h}(t_j) \in \rr^\fL$, where $[\cdot]_\ell$ denotes the $\ell$-th coordinate of the vector. 

In the following, we denote by $\bold{K}$ the stiffness matrix consisting of entries $\qv{\nabla \phi_h^\ell, \nabla \phi_h^w}$, where $\phi_h^\ell, \phi_h^w\in S_h$ are basis functions of $S_h$, while $\bold{M}$ (resp. $\bold{M}_{\nabla_x^* F}$ and $\bold{M}_{\nabla_x^* G_i}$) denote the mass matrices consisting of entries $\qv{\phi_h^\ell, \phi_h^w}$ (resp. $\qv{\nabla_x^* F(X_h(t_j)) \phi_h^\ell, \phi_h^w}$ and \newline
$\qv{\nabla_x^* G_i(X_h(t_j)) \phi_h^\ell, \phi_h^w}$). Then scheme \eqref{sim-X} can be reformulated as:
\beq{bold-X}\barray
(\bold{M}+\kappa \bold{K}) \bold{X}_h(t_{j+1}) \ad =\bold{M} \, \bold{X}_h(t_j)+ \kappa\, \bold{F}(t_j)  + \kappa\, \bold{M} \bold{u}_h(t_j) + \sum_{i=1}^N \bold{G}_i(t_j) \Dl_j W^i,
\earray\eeq
where $\bold{F}(t_j), \bold{G}_i(t_j) \in\rr^\fL$ with $[\bold{F}]_\ell := \qv{F(X_h(t_j)), \phi_h^\ell}$ and $[\bold{G}_i]_\ell := \qv{G_i(X_h(t_j)), \phi_h^\ell}$. Moreover, scheme \eqref{sim-q1} and \eqref{sim-Q} can be reformulated as follows:
\begin{itemize}
\item[(i)] Compute $\bold{M}\, \bold{Q}_h(T)= \nabla_x^* \bold{\Phi}_h(T)$, where $[\nabla_x^* \bold{\Phi}_h(T)]_\ell := \qv{\nabla_x^* \Phi(X_h(T)), \phi_h^\ell}$,
\item[(ii)] For $j=N_T-1,\dots, 0$, find the $\rr^\fL$-valued random variables $\bold{q}_{1,h}^i(t_j)$ and $\bold{Q}_h(t_j)$ such that 
\bea\ad 
\bold{M}\, \bold{q}_{1,h}^i(t_j)= \frac{1}{\kappa} \EE\big[\nabla_j W^i\, \bold{M}\, \bold{Q}_h (t_{j+1})| \CF_{t_j}\big],
\eea  
and 
\beq{bold-Q}\barray 
(\bold{M}+\kappa \bold{K}) \bold{Q}_h(t_j)\ad = \EE\big[\bold{M} \, \bold{Q}_h(t_{j+1})+ \kappa \bold{M}_{\nabla_x^* F} \bold{Q}_h(t_{j+1}) |\CF_{t_j}\big]\\
\aad\quad + \sum_{i=1}^N \kappa \bold{M}_{\nabla_x^* G_i} \bold{q}_{1,h}^i(t_j) +\kappa\, \nabla_x \bold{L}(t_j) + \sum_{k=1}^d \kappa\, \nabla_x^* \bold{h}^k \bold{c}_2^k (t_j),
\earray\eeq
where $\nabla_x \bold{L}(t_j) \in\rr^\fL$ with $[\nabla_x \bold{L}(t_j)]_\ell := \qv{\nabla_x L(X_h(t_j), u_h(t_j)), \phi_h^\ell}$ and $\nabla_x^* \bold{h}^k \bold{c}_2^k (t_j) \in \rr^\fL$ with $[\nabla_x^* \bold{h}^k \bold{c}_2^k (t_j)]_\ell := \qv{ \nabla_x^* h^k(X_h(t_j)) c_{2,h}^k(t_j), \phi_h^\ell}$. For notation $\nabla_x \bold{L}$, we keep $\nabla_x$ symbol in front of $\bold{L}$ to indicate that this term is calculated from $\nabla_x L(X_h(t_j), u_h(t_j))$. We adopt similar convention for notation $\nabla_x^* \bold{h}^k \bold{c}_2^k$.
\end{itemize}

\section{Numerical algorithm for solving partially observed optimal control}\label{sec:alg}
In this section, we introduce a general numerical algorithm for solving the partially observed optimal problem by combining the stochastic gradient algorithm, guided by the stochastic maximum principle in \thmref{thm:SMP}, with the particle filtering algorithm for computing nonlinear filtering.

\subsection{Update of the control}
For a prior chosen $\CF_t^Y$-adapted process $u_h^{0,Y}$, we determine the optimal control $u_h^{\star,Y}(t)$ at instant time $t\in [0,T]$ using gradient descend algorithm: 
\beq{GD}
u_h^{\iota+1,Y}(t)  = u_h^{\iota,Y}(t) -\al\, \nabla J(u_h^{\iota,Y}(t)), \quad \iota=0,1,2,\cdots
\eeq
where $\al$ is the step size, $\iota$ is the iteration index, and $u_h^{\iota,Y}(t)$ represents the finite element approximation of control $u^{\iota,Y}(t)$ at time $t$ and iteration number $\iota$.

From \eqref{nabla-J1}, computing  $\nabla J(u_h^{\iota,Y}(t))$ at instant time $t$ requires the trajectories $X_h(s), Q_h(s)$, $q_{1,h}^i(s)$, $C_h(s)$, and $c_{2,h}^k(s)$ of the spatial discretized forward-backward SPDE system \eqref{x-spde}-\eqref{BSDE-C} for $s\in [t,T]$, since $Q_h(s)$ and $C_h(s)$ are solved backward in time from $T$ to $t$. However, at instant time $t$, the observation information $\CF_s^Y$ for $s\in [t,T]$ is not yet available. This means that we cannot solve $\{X_h(s), Q_h(s), q_{1,h}^i(s), C_h(s), c_{2,h}^k(s)\}$ for $s \in [t,T]$ using estimated control $u_h^{\iota,Y}(s)$ at time $s$, as we only have access to $\CF_t^Y$ at time $t$.

A fundamental aspect of the partially observed control problem is that the control must be adapted to the observation filtration. Specifically, $u_h^{\iota,Y}(s)$ is $\CF_s^Y$-adapted for $t\leq s \leq T$. Yet at time $t$, the future observation filtration $\{\CF_{s}^Y\}_{t\leq s \leq T}$ is unknown. To address this challenge, we replace the control $u_h^{\iota,Y}(s)$ at time $s$ with its conditional expectation $\EE[u_h^{\iota,Y}(s)|\CF_t^Y]$. This substitution is justified by the fact that the conditional expectation provides the best approximation of $u_h^{\iota,Y}(s)$ given the available information $\CF_t^Y$. 

For $s\in [t,T]$, let us first take the conditional expectation with respect to $\CF_t^Y$ in the gradient descent algorithm \eqref{GD} over the interval $[t,T]$. It leads to 
\beq{cond-GD}
\EE\big[u_h^{\iota+1,Y}(s)|\CF_t^Y \big]= \EE\big[u_h^{\iota,Y }(s)| \CF_t^Y \big] -\al\, \EE \big[\nabla J(u_h^{\iota,Y}(s))|\CF_t^Y \big], \quad t\leq s \leq T,
\eeq
where $\EE[\nabla J(u_h^{\iota,Y}(s))|\CF_t^Y]$ is given by taking conditional expectation of \eqref{nabla-J1}: 
\beq{cond-grad}
\EE\big[\nabla J(u_h^{\iota,Y}(s))| \CF_t^Y \big] = \EE\big[\nabla_u^* F(X_h^{\iota,Y}(s), u_h^{\iota,Y}(s)) Q_h(s) + \nabla_u L(X_h^{\iota,Y}(s), u_h^{\iota,Y}(s)) |\CF_t^Y\big].
\eeq
We then replace $u_h^{\iota,Y}(s)$ in \eqref{cond-grad} by its conditional expectation $\EE[u_h^{\iota,Y}(s)|\CF_t^Y]$. 

Let us define 
\bea
u_h^{\iota,Y}(s)|_t:= \EE[u_h^{\iota,Y}(s)|\CF_t^Y].
\eea
Then for any $s\in [t,T]$, the conditional gradient descent algorithm \eqref{cond-GD} finally becomes
\beq{cond-GD1}
u_h^{\iota+1, Y}(s)|_t = u_h^{\iota,Y}(s)|_t -\al\, \EE\big[\nabla J(u_h^{\iota,Y}(s)|_t) | \CF_t^Y\big].
\eeq
We observe that when $s=t$,
\bea
u_h^{\iota,Y}(t)= \EE [u_h^{\iota,Y}(t)|\CF_t^Y],
\eea
because $u_h^{\iota,Y}(t)$ is adapted to observation filtration $\CF_t^Y$.

% Moreover, we note that \eqref{cond-grad} depends on $u_h^{\iota,Y}(s)$. However, as mentioned earlier, at time $t$, only information $\CF_t^Y$ is available. Therefore, $u_h^{\iota,Y}(s)$ should be replaced by $\EE[u_h^{\iota,Y}(s)| \CF_t^Y]$. 

In our work, we will develop numerical methods to compute the conditional estimated control process $\EE[u_h^{\iota,Y}(s)| \CF_t^Y]$ for $s\in [t,T]$ letting the time $t$ gradually increase. To proceed, we will apply the algorithm \eqref{cond-GD1} on a temporal grid:
\bea
\Pi:=\{t_n: 0 = t_0 < t_1 < t_2 <\cdots < t_{N-1} < t_N = T\},\;  n=0,1,\dots,N.
\eea
The algorithm \eqref{cond-GD1} at time $t_n$ reads as
\beq{cond-GD-ti}
u_{h}^{\iota+1,Y}(t_j)|_{t_n} = u_{h}^{\iota,Y}(t_j)|_{t_n} - \al\, \EE \big[ \nabla J(u_{h}^{\iota, Y}(t_j)|_{t_n})| \CF_{t_n}^Y\big],\quad  n \leq j \leq N.
\eeq
In the above,
\beq{E-grad}\barray\ad
\EE \big[\nabla J(u_{h}^{\iota,Y}(t_j)|_{t_n}) | \CF_{t_n}^Y \big] \\
\aad = \EE\big[\nabla_u^* F(X_{h}(t_j),  u_{h}^{\iota,Y}(t_j)|_{t_n}) Q_h(t_j) + \nabla_u L(X_h(t_j), u_{h}^{\iota, Y}(t_j)|_{t_n} ) \big| \CF_{t_n}^Y \big]\\
\aad = \int_{\rr^\fL} \EE\Big[ \psi(X_{h}(t_j),u_{h}^{\iota,Y}(t_j)|_{t_n}, Q_{h}(t_j)) \Big| X_h(t_n)=x \Big] \cdot p(x|\CF_{t_n}^{Y}) \, dx
\earray\eeq
where $\psi: \rr^\fL \times \rr^\fL \times \rr^\fL \to \rr$ is defined as 
\bea
\psi(X_h, u_h, Q_h):= \nabla_u^* F(X_h,u_h) Q_h + \nabla_u L(X_h, u_h),
\eea
and 
$p\left (\cdot |\CF_{t_n}^Y \right )$ is the probability density function (pdf) of the law of $X_{h}(t_n)$ given the observation information $\CF_{t_n}^Y$. This is where nonlinear filtering takes effect and plays a significant role in partially observed optimal control problems. To compute \eqref{E-grad}, the probability density function $p\left (X_{h}(t_n) |\CF_{t_n}^{Y} \right )$ will be approximated by its empirical distribution $\pi(X_{h}(t_n)|\CF_{t_n}^Y)$, which can be computed by the particle filtering algorithm, given by 
\bea\ad 
\pi(X_{h}(t_n)|\CF_{t_n}^Y) = \frac{1}{S} \sum_{s=1}^S \dl_{x_n^s} (X_{h}(t_n)).
\eea
where $\{x_n^s\}_{s=1}^S$ are the particle cloud at time $t_n$ and $S$ is the total number of particles. 

\subsection{Particle filtering for computing conditional distribution} \label{sec:particle}
The goal of nonlinear filtering is to determine the conditional distribution of the state process given observations.

In this paper, we adopt the Bayesian filter framework to approximate the filtering density $p(X_h(t_j)| \CF_{t_j}^Y)$. It consists of two steps: the prediction step and the update step. For the prediction step, suppose we know the distribution $p(X_{h}(t_{j-1})|\CF_{t_{j-1}}^Y)$ at time $t_{j-1}$. The prediction step then provides the pdf of thecontrolled process $X_h(t_j)$ at time $t_j$ given information $\CF_{t_{j-1}}^Y$ through the following Chapman-Kolmogorov equations:
\beq{pred}
p\big(X_h(t_j)|\CF_{t_{j-1}}^{Y}\big) = \int_{\rr^\fL} p\big(X_h(t_{j-1})|\CF_{t_{j-1}}^Y\big) p\big(X_h(t_j) | X_h(t_{j-1})\big) d X_h (t_{j-1}),
\eeq
where $p(X_h(t_j) |X_h(t_{j-1}))$ is the transition probability for the state process $X_h$ in \eqref{fb-spde} from $t_{j-1}$ to $t_j$. As the new observation data $Y(t_j)$ is received, the update step appplies the Bayesian inferences to update the prior probability density function and obtain the posterior pdf $p(X_h(t_j)|\CF_{t_j}^Y)$ as follows:
\beq{update}
p(X_h(t_j)| \CF_{t_j}^Y ) = \frac{p(X_h(t_j) | \CF_{t_{j-1}}^Y)\, p(Y(t_j)| X_h(t_j))}{ p(Y(t_j)| \CF_{t_{j-1}}^Y)},
\eeq
where $p(Y(t_j) | X_h(t_j))$ is the likelihood function that describes the discrepancy between the predicted state and the observations. To proceed, we introduce the bootstrap filter algorithm in  \cite{GSS93} as the benchmark particle filter algorithm, due to the efficiency for solving nonlinear filtering problems. 

At time $t_{j-1}$, suppose we have $S$ particles $\{x_{j-1}^s\}_{s=1}^S$ that follow the empirical distribution
\bea \ad 
\pi\Big(X_h(t_{j-1}) | \CF_{t_{j-1}}^Y \Big):= \frac{1}{S} \sum_{s=1}^S \dl_{x_{j-1}^s}(X_h(t_{j-1})).
\eea
It serves the approximation of the prior distribution $p(X_h(t_{j-1})|\CF_{t_{j-1}}^Y)$ at time $t_{j-1} $. Here $\dl_x$ represents the Dirac delta function at $x$. Therefore, the prior pdf in the prediction step \eqref{pred} can be approximated by 
\beq{prior-pdf}\barray\ad 
\wdt \pi \big(X_h(t_j) | \CF_{t_{j-1}}^Y \big)  := \frac{1}{S} \sum_{s=1}^S \dl_{\wdt x_j^s} ( X_h(t_j))
\earray\eeq
where $\wdt x_j^s$ are sampled from $\pi \big(X_h(t_{j-1})| \CF_{t_{j-1}}^Y \big) \, p\big(X_h(t_j) | X_h(t_{j-1}) \big)$. In other words, the empirical distribution of  sample cloud $\{\wdt x_j^s\}_{s=1}^S$ provides an approximation of the probability distribution $p(X_h(t_j)| \CF_{t_{j-1}}^Y)$. In the update step \eqref{update}, we replace $p\big(X_h(t_j)| \CF_{t_{j-1}}^Y \big)$ with $\wdt \pi\big(X_h(t_j) | \CF_{t_{j-1}}^Y \big)$ which is obtained in \eqref{prior-pdf} using particle cloud $\{\wdt x_j^s\}_{s=1}^S$. We then obtain the posterior pdf as:

\beq{posterior-pdf}\barray\ad 
\wdt \pi \big(X_h(t_j) | \CF_{t_j}^Y \big) := \frac{\sum_{s=1}^S \dl_{\wdt x_i^s}(X_h(t_j))  p(Y(t_j)| \wdt x_i^s)}{\sum_{s=1}^S p(Y(t_j) | \wdt x_i^s)} = \sum_{s=1}^S \omega_j^s \dl_{\wdt x_j^s}(X_h(t_j))
\earray\eeq
where the weights $\omega_j^s$ are proportional to $p\big(Y_h(t_j)| \wdt x_j^s \big)$. From this, we obtain a weighted empirical distribution $\wdt \pi(X_h(t_j) | \CF_{t_j}^Y)$ that approximates the posterior pdf $p(X_h(t_j)| \CF_{t_{j}}^Y)$ with importance density weight $\omega_j^s$ . In practice, after several time steps, the importance weights $\{\omega_j^s\}_{s=1}^S$ will tend to concentrate on a few samples, which reduces the effective particle size in the algorithm. To avoid the weight degeneracy problem, we resample particles $\{\wdt x_j^s\}_{s=1}^S$ by replacing particles with low density weights with copies of particles that have high weights. In the bootstrap particle filter \cite{GSS93}, the importance sampling method is used to generate equally weighted samples $\{x_j^s\}_{s=1}^S$ from $\wdt \pi\big(X_h (t_j)| \CF_{t_j}^Y\big)$. These resampled samples are then used to formulate the empirical distribution \beq{pf-pi} 
\pi \Big(X_h(t_j) | \CF_{t_j}^Y \Big) = \frac{1}{S} \sum_{s=1}^S \dl_{x_i^s}(X_h(t_j)).
\eeq
which serves as the approximation of $p\big(X_h(t_j)| \CF_{t_j}^Y\big)$.

\subsection{Stochastic gradient descent algorithm}
We now in the position to combine the numerical schemes for solving the forward-backward SPDEs in \secref{sec:num-FBSPDE}, the conditional gradient descent algorithm in \eqref{cond-GD-ti}, and the particle filtering algorithm from \secref{sec:particle} to formulate an efficient stochastic optimization algorithm to solve the partially observed optimal control problem.

Recall the conditional gradient descent algorithm in \eqref{cond-GD} requires to compute \eqref{E-grad}. 
We approximate the  conditional distribution $p\left (X_h(t_n) |\CF_{t_n}^Y\right )$ by the empirical distribution $\pi \left (X_h(t_n) | \CF_{t_n}^Y \right )$ from \eqref{pf-pi}. Therefore, \eqref{E-grad} can be approximated by
\beq{E-pi}\barray\ad 
\EE^\pi \big[\nabla J(u_h^{\iota,Y}(t_j)|_{t_n}) | \CF_{t_n}^Y\big]\\
\aad := \frac{1}{S} \sum_{s=1}^S \EE^\QQ \big[\nabla_u^* F(X_h(t_j), u_h^{\iota,Y}(t_j)|_{t_n}) Q_h(t_j)+ \nabla_u L(X_h(t_j), u_h^{\iota, Y}(t_j)|_{t_n}) | X_h(t_n)= x_n^s\big],
\earray\eeq
where $\{x_n^s\}_{s=1}^S$ are  samples from the distribution $\pi\left (X_h(t_n)| \CF_{t_n}^Y \right )$, which describes the approximated conditional pdf of the controlled process $X_h(t_n)$ given the observation information $\CF_{t_n}^Y$. Note that the right-hand side of \eqref{E-pi} is an expectation, and we can apply Monte Carlo simulation to compute this expectation. Specifically, $\EE^\pi[\nabla J(u_h^{\iota,Y}(t_j)|_{t_n}) | \CF_t^Y]$ can be approximated by 
\beq{Epi-MC}\barray\ad\!\!\!\! 
\EE^\pi[\nabla J(u_h^{\iota, Y} (t_j)|_{t_n}) | \CF_{t_n}^Y] \\
\aad\!\!\!\! \approx \frac{1}{S}\frac{1}{\La} \sum_{s=1}^S \sum_{\varsigma=1}^\La \Big[\nabla_u^* F(X_h^{\varsigma,s}(t_j), u_h^{\iota,Y}(t_j)|_{t_n}) Q_h^{\vsg,s}(t_j))+ \nabla_u L(X_h^{\vsg,s} (t_j), u_h^{\iota,Y}(t_j)|_{t_n}) | X_h(t_n)=x_n^s \Big]
\earray\eeq
where $\vsg$ is the index for $\La$ total numbers of samples used to approximate the expectation in the right-hand side of \eqref{E-pi}. The term $X_h^{\vsg,s}(t_j)$ is the $\vsg$-th realization of the controlled spatial-discretized state process with state $X_h(t_n)=x_n^s$, and $Q_h^{\vsg,s}(t_j)$ is the corresponding approximate solution $Q_h(t_j)$.

In the above computation, we observe that to approximate the conditional expectation in \eqref{Epi-MC}, we need $S\times \La$ samples of the  controlled state process to update a single gradient descent step in \eqref{cond-GD}. In practice, this becomes computationally expensive, especially in our case, where the dimension of the spatial-discretized state process $X_h(t_j)$ is high.

Motivated by the stochastic approximation algorithm, specifically the stochastic gradient descent algorithm, we aim to avoid approximating the conditional expectation in \eqref{Epi-MC} by all $S\times \La$ samples. Instead, we utilize a single realization to represent the conditional expectation; see \cite{ABYZ20}. This can be justified as follows: In the stochastic gradient descent algorithm (or stochastic approximation algorithm in general), the difference between the expectation and a single realization can be viewed as a mean-zero noise satisfying suitable properties. For instance, the noise in SGD can be of martingale difference type or even satisfy some mixing properties; see \cite{KY03}. Consequently, a single realization leads to
\bea
\nabla_u^* F(X_h^{\wdh \vsg, \wdh s}(t_j), u_h^{\iota,Y}(t_j)|_{t_n}) Q_h^{\wdh \vsg, \wdh s}(t_j) + \nabla_u L(X_h^{\wdh \vsg, \wdh s}(t_j), u_h^{\iota, Y}(t_j)|_{t_n})
\eea
where $X_h^{\wdh \vsg, \wdh s}(t_j)$ is a randomly generated realization of the controlled state process,  with initial state $X_h^{\wdh \vsg, \wdh s}(t_n) = x_n^{\wdh s}$ selected randomly from the particle cloud $\{x_n^{s}\}_{s=1}^S$. The term $Q_h^{\wdh \vsg, \wdh s}(t_j)$ represents the approximated solution of $Q_h(t_j)$ corresponding to this random sample $X^{\wdh \vsg, \wdh s}(t_j)$. For the  state-of-the-art study of the stochastic approximation algorithm, we refer the reader to the book of Kushner and Yin \cite{KY03}.

The conditional gradient descent algorithm in \eqref{cond-GD} then becomes the following conditional  stochastic gradient descent algorithm:
\beq{SGD}
u_h^{\iota,Y}(t_j)|_{t_n} = u_h^{\iota,Y}(t_j)|_{t_n} - \al\,\big[ \nabla_u^* F(X_h^{\wdh \vsg, \wdh s}(t_j), u_h^{\iota,Y}(t_j)|_{t_n}) Q_h^{\wdh \vsg, \wdh s}(t_j)+ \nabla_u L(X_h^{\wdh \vsg, \wdh s}(t_j), u_h^{\iota,Y}(t_j)|_{t_n}) \big]
\eeq
for $t_j \geq t_n, \iota=0,1,2,\dots,N_{\text{SGD}}$, where $N_{\text{SGD}}$ is the total number of iterations of SGD.

The algebraic form of algorithm \eqref{SGD} can be given as 
\beq{bold-SGD}
\bold{u}_h^{\iota,Y}(t_j)|_{t_n} = \bold{u}_h^{\iota,Y}(t_j)|_{t_n} - \al\,\big[ \bold{M}_{\nabla_u^* F}^{\iota,Y}(t_j)|_{t_n} \bold{Q}_{h}^{\wdh \vsg, \wdh s}(t_j)+ \nabla_u \bold{L}^{\iota,Y}(t_j)|_{t_n} \big]
\eeq
where $\bold{M}_{\nabla_u^* F}^{\iota,Y}(t_j)|_{t_n} \in \rr^{\fL\times \fL}$ with $\big[\bold{M}_{\nabla_u^* F}^{\iota,Y}(t_j)|_{t_n} \big]_{\ell,w} =\qv{\nabla_x^* F(X_h^{\wdh \vsg, \wdh s}(t_j), u_h^{\iota,Y}(t_j)|_{t_n}) \phi_h^\ell, \phi_h^w}$, and $\nabla_u \bold{L}^{\iota,Y}(t_j)|_{t_n}  \in \rr^\fL$ with $[\nabla_u \bold{L}^{\iota,Y}(t_j)|_{t_n}]_\ell=\qv{\nabla_u L(X_h^{\wdh \vsg, \wdh s}(t_j), u_h^{\iota,Y}(t_j)|_{t_n}), \phi_h^\ell}$. Here,  $\bold{Q}_h^{\wdh \vsg, \wdh s}(t_j)$ can be calculated by \eqref{bold-Q} corresponding to the state process $X_h^{\wdh \vsg, \wdh s}(t_j)$. 

From \eqref{bold-X}, $X_h^{\wdh \vsg, \wdh s}(t_j)$ can be computed by the following matrix form
\beq{Xh-hat}\barray\ad 
(\bold{M} + \kappa \, \bold{K}) \bold{X}_h^{\wdh \vsg, \wdh s}(t_{j+1})= \bold{M}\, \bold{X}_h^{\wdh \vsg, \wdh s}(t_j) + \kappa \bold{M} \, \bold{F}^{\wdh \vsg, \wdh s}(t_j)+ \kappa \bold{M}\, \bold{u}_h^{\iota, Y}(t_j)|_{t_n} + \sum_{i=1}^N \bold{G}_i^{\wdh\vsg, \wdh s}(t_j) \omega_i^{\wdh \vsg, \wdh s}
\earray\eeq
where $\bold{F}^{\wdh h, \wdh s}(t_j)$ and $\bold{G}_i^{\wdh \vsg, \wdh s}(t_j)$ are defined similar as \eqref{bold-X} by replacing $X_h(t_j)$ with $X_h^{\wdh \vsg, \wdh s}(t_j)$, initial $X_h^{\wdh \vsg, \wdh s}(t_n)=x_n^{\wdh s}\in \{x_n^s\}_{s=1}^S$, and $\omega_i^{\wdh \vsg, \wdh s} \backsim N(0,1)$. The $\{\omega_i^{\wdh \vsg, \wdh s}\}_{i=1}^{N_T-1}$ form a sequence of Gaussian random variables corresponding to the sample index $\wdh \vsg$ and $x_n^{\wdh s}$. 

Regarding the update of the control in the stochastic gradient descent algorithm \eqref{bold-SGD}, one needs to compute the value of $\bold{Q}_h^{\wdh \vsg, \wdh s}(t_j)$ corresponding to the particle $X_h^{\wdh \vsg, \wdh s}(t_n)=x_n^{\wdh s}$ at time $t_n$. The computation of $\bold{Q}_h^{\wdh \vsg, \wdh s}(t_j)$ in the algorithm \eqref{bold-Q} is still a Monte-Carlo type method, where expectation or conditional expectation is involved. To avoid directly calculating the conditional expectation,  we further utilize the idea of the  stochastic gradient descent algorithm,  representing the expectation by a single-realization of the trajectory. Therefore, from the solution path $\{\bold{X}_h^{\wdh \vsg, \wdh s}(t_j)\}_{j=n}^{N_T}$ using particles $x_n^{\wdh s}$ from \eqref{Xh-hat}, we obtain the numerical solution of $\bold{Q}_h^{\wdh \vsg, \wdh s}(t_j)$ and $\bold{q}_{1,h}^{i,\wdh \vsg, \wdh s}(t_j)$ as follows:
\beq{bold-q1h-hat}\barray\ad
\bold{M} \bold{q}_{1,h}^{i,\wdh \vsg, \wdh s}(t_j) = \frac{1}{\kappa} \bold{Q}_h^{\wdh \vsg, \wdh s}(t_{j+1})  \omega_i^{\wdh \vsg, \wdh s}(t_j)
\earray\eeq
and 
\beq{bold-Q-hat}\barray 
(\bold{M} + \kappa \bold{M}) \bold{Q}_h^{\wdh \vsg, \wdh s}(t_j) \ad = \bold{M} \bold{Q}_h^{\wdh \vsg, \wdh s}(t_{j+1})+ \kappa \bold{M}_{\nabla_x^* F} \bold{Q}_h^{\wdh \vsg, \wdh s}(t_j) \\
\aad + \sum_{i=1}^N \kappa \bold{M}_{\nabla_x^* G_i} \bold{q}_{1,h}^{i, \wdh \vsg, \wdh s}(t_j) + \kappa \nabla_x \bold{L}(t_j) + \sum_{k=1}^d \kappa \nabla_x^* \bold{h}^k\bold{c}_2^k (t_j)
\earray\eeq
Similarly, the computation of $c_{2,h}^k, k=1,\dots, d$ in \eqref{bold-Q-hat} will also be based on one single realization in numerical schemes \eqref{sim-c2} and \eqref{sim-C}. 

In summary, we integrate the particles in the particle filtering algorithm and the random samples in the gradient descent process into a unified stochastic gradient descent algorithm. This approach utilizes a single realization of a sample from  $\times \La$ calculations of $X_h^{\vsg, s}(t_j)$ and $Q_h^{ \vsg,s}(t_j)$ in \eqref{Epi-MC}. The resulting conditional optimal control process $\{\bold{u}_h^{N_{\text{SGD}},Y}(t_j)|_{t_n}\}$ from \eqref{bold-SGD} provides an estimate of the optimal control $\bold{u}_h^{\star,Y}(t_n)$ at time $t_n$ by 
\bea
\bold{u}_h^{\star,Y}(t_n):= \bold{u}_h^{N_{\text{SGD}},Y}(t_n)|_{t_n}.
\eea

We finally summarize our numerical algorithm for solving the partially observed optimal control problem of stochastic partial differential equations as follows:

\begin{Algorithm}{(FE-PF-SGD)}.\label{alg:FE-PF-SGD}
\begin{itemize}
    \item[(1)] Initialize the particle cloud $\{x_0^s\}_{s=1}^S \backsim \zeta$, where $\zeta$ is the initial distribution of $X_0$, and set the number of iteration $N_{\text{SGD}} \in \NN$.
    \item[(2)] Iterate time index $n=0,1,2,\dots, N_T$:
    \begin{itemize}
        \item[(i)]  Initialize the estimated control $\{u_h^{0,Y}(t_j)|t_n\}_{j=n}^{N_T}$ and set the learning rate $\al$.
        \item[(ii)] Iterate the SGD iteration $\iota=1,2,\dots, N_{\text{SGD}}$ for $N_{\text{SGD}}$ steps: 
        \begin{itemize}
            \item[(a)] \textbf{FSPDE}: compute one realization of controlled state process $\{X_h^{\wdh \vsg, \wdh s}(t_j)\}_{j=n}^{N_T-1}$ by \eqref{sim-X} with initial $X_h^{\wdh \vsg, \wdh s}(t_n)=x_n^{\wdh s}$ randomly selected from $\{x_n^s\}_{s=1}^S$.
            \item[(b)] \textbf{BSPDE}: compute one single realization of $\{ Q_h^{\wdh \vsg, \wdh s}(t_j), q_{1,h}^{i,\wdh \vsg, \wdh s} \}_{j=N_T}^n$ by \eqref{sim-q1} and \eqref{sim-Q} and one single realization of $\{C_h(t_j)$, $c_{2,h}^k(t_j)\}_{j=N_T}^n$ by \eqref{sim-C} and \eqref{sim-c2} corresponding to $\{ X_h^{\wdh \vsg, \wdh s}(t_j)\}_{j=n}^{N_T-1}$.
            \item[(c)] \textbf{SGD}: update the control using stochastic gradient descent algorithm \eqref{bold-SGD} to obtain $\{u_h^{\iota+1, Y}(t_j)|_{t_n}\}_{j=n}^{N_T}$. 
        \end{itemize}
    \item[(iii)] The estimated optimal control at instant time $t_n$ is given $u_h^{\star,Y}(t_n)= u_{h}^{N_{\text{SGD}},Y}(t_n)|_{t_n}$.
    \item[(iv)] Propagate particles using particle algorithm \eqref{prior-pdf}, \eqref{posterior-pdf}, and \eqref{pf-pi} to obtain particle cloud $\{x_{n+1}^s\}_{s=1}^S$ using optimal control $u^{\star,Y}(t_n)$ at instant $t_n$.
    \end{itemize} 
\end{itemize}
\end{Algorithm}

\section{Numerical Examples}\label{sec:num}
In this section, we present two numerical examples to demonstrate the effectiveness of our numerical algorithm for solving the partially observed optimal control problem. For spatial discretization, we consider Galerkin finite dimensional subspace
\bea\ad 
S_n : =\text{span}\Big\{1,\sqrt{\frac{2}{L}} \cos\Big(\frac{k}{L}\pi \cdot\Big) \Big| k=1,2,\dots, n \Big\},
\eea
with orthonormal basis
\bea\ad 
\phi_0 = \frac{1}{\sqrt{L}}, \; \phi_k = \sqrt{\frac{2}{L}} \cos\Big(\frac{k}{L} \pi \cdot \Big), \quad k=1,\dots, n,
\eea
or the finite element subspace 
\bea
\bar S_n = \text{span}\{\bar \phi_k | k = 1,2,\dots, n\} \subset H^1(0,L),
\eea
with basis
\bea
\bar\phi_k(\xi) = \left\{\barray
n\big(\xi - \frac{k-1}{n}L\big), \ad \quad\text{ if } \xi \in [(k-1)L/n, K L/n ], \\
n\big(\frac{k+1}{n}L - \xi \big), \ad \quad \text{ if } \xi \in [kL/n, (k+1)L/n ],  \\
0, \ad \quad \text{ otherwise}.
\earray\right.
\eea

In the following, we will always consider $n=400$ for finite element approximation. To solve the finite-element discretized version of \eqref{fb-spde} using the implicit Euler-Maruyama method, we take the time discretization step size $\Dl t= 0.01$ with terminal time $T=1$. The total iteration number of the stochastic gradient descent algorithm will be $N_{\text{SGD}} = 1,000$. The size of the  particle cloud will be $S=500$. The numerical algorithm is performed with Python on a Macbook Air equipped with Apple M2 chip, 16GB memory.

\begin{exm}{\rm{(Stochastic heat equation)}} \label{exm:heat}\rm{
We consider the following controlled stochastic heat equations with an additive cylindrical Wiener process:
\beq{heat}\left\{\barray
dX(t) =[\Dl X(t)+u(t)]dt + 0.05 dW(t) \quad t\in [0,1], \\
X(0) = X_0 \in H:=L^2(0,10),
\earray\right. \eeq
with $L=10$ and Dirichlet boundary conditions $X_0 = 0$. The observation process is taken as 
\beq{ob-heat}
dY(t)= h(X(t))dt+ dB(t), \quad Y(0)=0\in \rr^d.
\eeq
where $h(x)=\arctan(\qv{x,\sg_1}_H, \dots, \qv{x,\sg_d}_H)$ for some pre-selected $\sg_1, \cdots, \sg_d\in H$. In many practical applications, it is both natural and necessary to utilize a pre-selected set of elements \(\{\sg_i\}_{i=1}^d\) in the observation process. This is particularly relevant when dealing with complex system states, such as those arising in climate modeling, turbulence, and water movement in natural environments. Given the continuous and often high-dimensional nature of these systems, it is infeasible to observe their trajectories at every spatial point. Instead, observations must be strategically obtained from a finite number of monitoring locations. These pre-selected observation points serve as critical sources of information, enabling researchers to infer the underlying system dynamics while circumventing the impracticality of full-state measurement. The cost functional we consider is 
\bea\ad 
J(u)=\EE^\QQ \int_0^{1}\half \Big[ \|X(t)\|_{L^2(0,10)}^2 + \|u(t)\|_{L^2(0,10)}^2 \Big] dt + \half \EE^\QQ \|X(T)\|_{L^2(0,10)}.
\eea
In other words, in \eqref{sde}, we take \bea F(X(t),u(t))\ad =u(t),\quad G(X(t))=0.05, \\
L(X(t),u(t))\ad =\half \big[\|X(t)\|_{L^2(0,10)}^2 + \|u(t)\|_{L^2(0,10)}^2\big],\quad \Phi(X(T))=
\half \|X(T)\|_{L^2(0,10)}^2.
\eea
Therefore, $\nabla_x F=0, \nabla_u F=I$, $\nabla_x L(X(t),u(t))=X(t)$, $ \nabla_u L(X(t),u(t))=u(t)$, $\nabla_x \Phi(X(T))=X(T)$, and $\nabla_x^* h^k(X(t))c_2^k(t)= c_2^k(t) \sg_k /(1+\qv{X(t),\sg_k}_H^2)$. 

In numerical experiments, we take $d=3$. The preselected elements $\{\sg_i\}_{i=1}^d$ in $H$ will be projected to finite-dimensional space $S_h$, thus we have
$\sg_i \approx \sum_{\ell=1}^\fL \sg_{i,\ell}\phi_h^\ell$, where $\sg_{i,\ell}=\qv{\sg_i, \phi_h^\ell} \in \rr, i=1,\dots, d$. Define the matrix $\bold{\Sigma}:=(\sg_{i,\ell})_{i,\ell}\in \rr^{d\times \fL}$, we have $h(X_h(t))= \arctan(\bold{\Sigma} \bold{M} \bold{X}_h(t))$ and $ \sum_{k=1}^d \nabla_x^* \bold{h}^k \bold{c}_2^k(t)=\bold{M} \wdt{c}_2(t) \bold{\Sigma}$, where $\wdt c_2(t) = (\wdt{c}_2^1(t), \dots, \wdt{c}_2^d(t))$ with
\bea\ad 
\wdt{c}_2^k(t) = \frac{c_2^k(t)}{1+[\bold{\Sg} \bold{M} \bold{X}_h(t)]_k^2}.
\eea
Here $[\bold{\Sg} \bold{M} \bold{X}_h(t)]_k$ is the $k$-th coordinate of $\bold{\Sg} \bold{M} \bold{X}_h(t)$. For simplicity, we take $\bold{\Sigma}=\bold{I}\in \rr^{d\times \fL}$, the identity matrix with suitable dimensions. The numerical algorithm for solving backward stochastic partial differential equations \eqref{bold-Q} becomes 
\bea
(\bold{M}+\kappa \bold{K}) \bold{Q}_h(t_j)= \EE \big[\bold{M} \bold{Q}_h(t_{j+1})|\CF_{t_j}^Y\big]+\kappa \nabla_x \bold{ L}(t_j) + \sum_{k=1}^d \kappa \nabla_x^* \bold{h}^k \bold{c}_2^k (t_j).
\eea

After about $1,000$ iterations, we end up with an approximated cost of $J\approx 0.6327$. Below, we presented our simulation results. \figref{fig:heat-unc-state} displays one realization of the uncontrolled stochastic heat equation. \figref{fig:heat-control-state} presents our approximation of optimal control with partial observations (left) and the corresponding sample path of controlled solution (right).
\begin{figure}[H]
\includegraphics[width=0.5\textwidth]{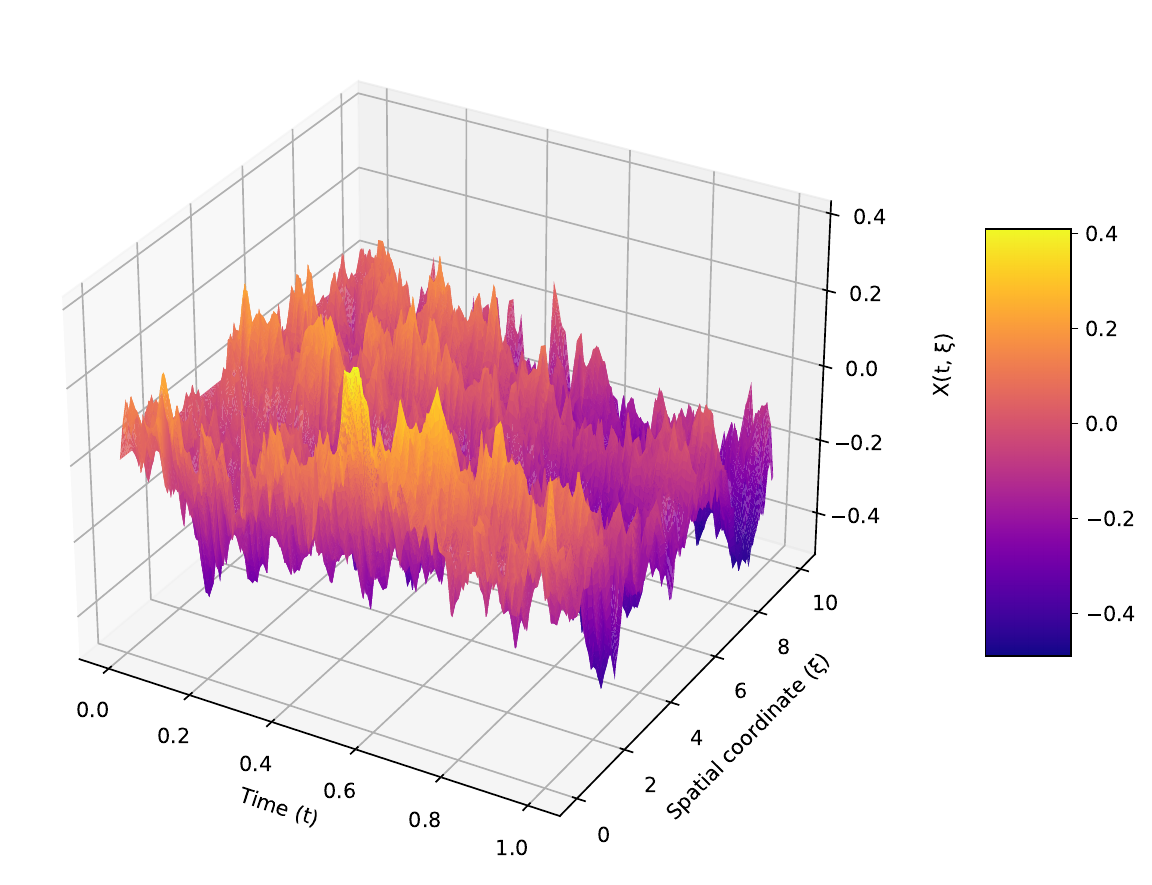}
\label{fig:heat-unc-state}
\caption{Sample path of stochastic heat equation without control.}
\end{figure}

\begin{figure}[H]
\includegraphics[width=0.48\textwidth]{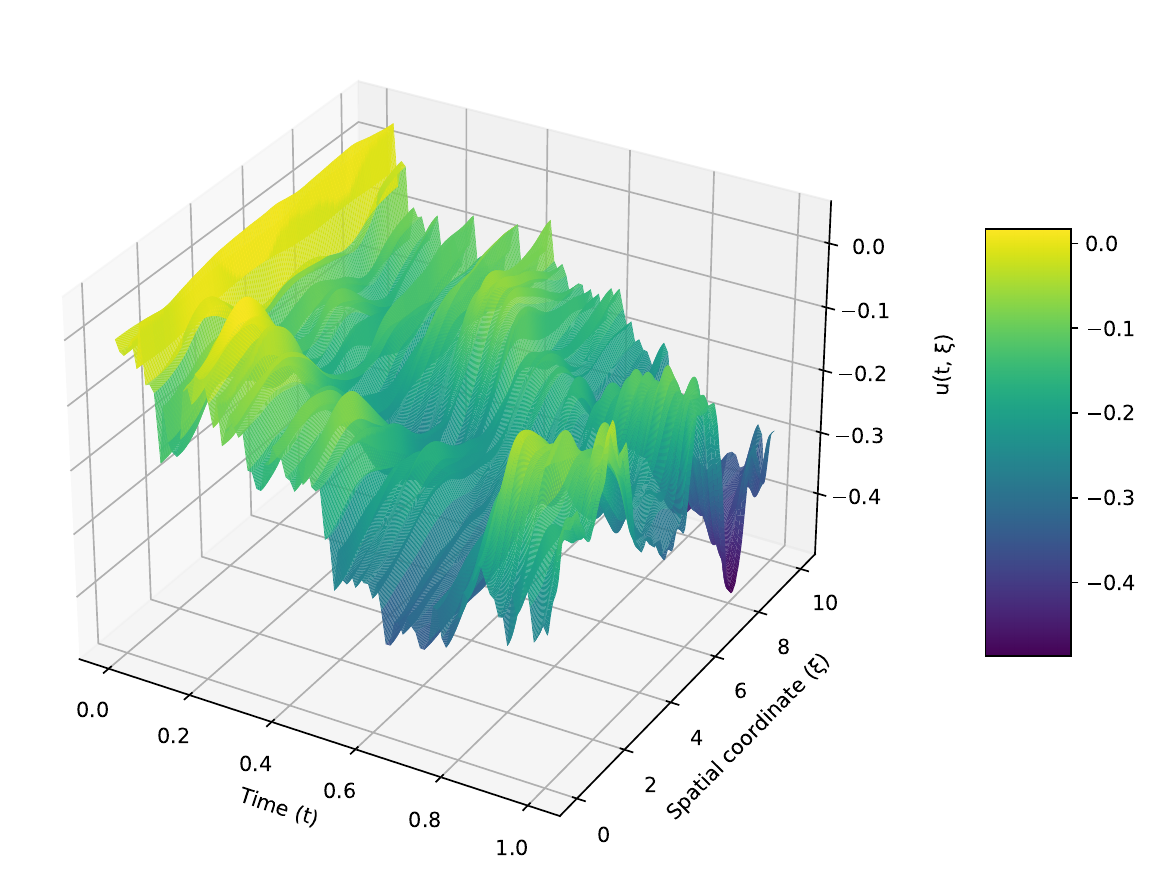}
\includegraphics[width=0.48\textwidth]{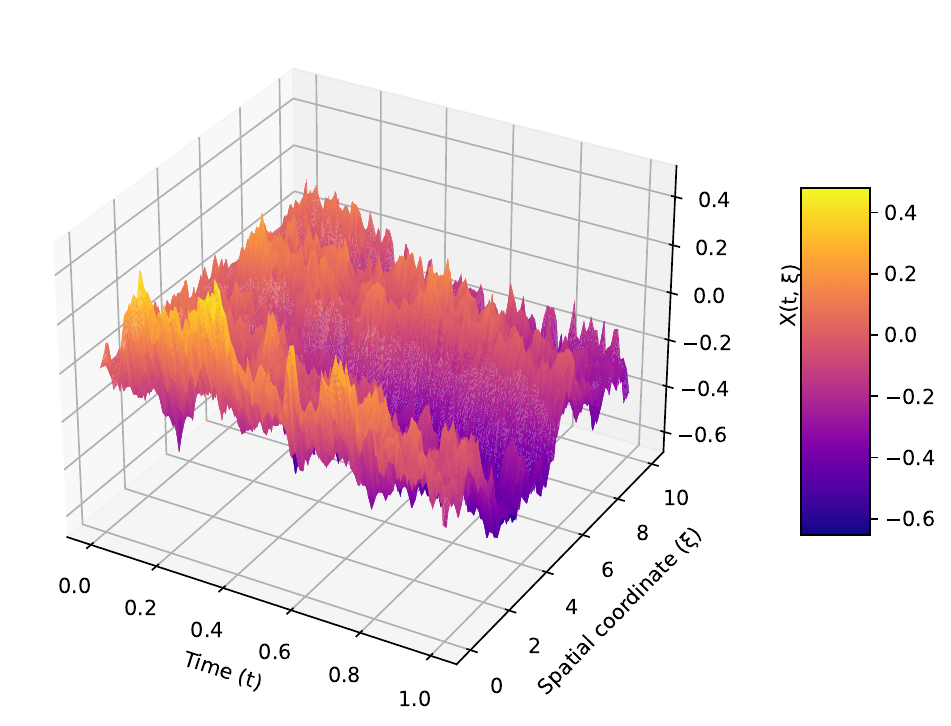} 
\caption{Sample path of approximated partially observed optimal control and the corresponding sample path of optimal controlled state solution.}
\label{fig:heat-control-state}
\end{figure}
}
\end{exm}

\begin{exm}{\rm{(Stochastic Nagumo equations)}} \label{exm:Nagu}\rm{
We consider the following partially observed controlled stochastic Nagumo equation:
\beq{Nagumo}\left\{\barray\disp
dX(t)= \big[\Dl X(t)- X(t)(X(t)-\frac{1}{2})(X(t)-1)+ u(t)\big]dt + \sum_{i=1}^N 0.05 (X(t)+1)e_i dW^i(t)\\
X(0) = X_0 \in L^2(0,20)
\earray\right.\eeq
with Neumann boundary condition $\partial X/\partial \bold{n}(t,\xi) = 0$ and $X_0\in \indi_{[5,15]}$. The observation process is taken as \eqref{ob-heat}. The setting of this example is not exactly the same as we discussed in our previous sections, since we deal with a zero Neumann boundary condition and dissipative drift with polynomial growth in \eqref{Nagumo}. However, the corresponding numerical algorithm takes the same form as in Algorithm \eqref{alg:FE-PF-SGD}. One needs to replace the finite element approximation by the spectral method using the Fourier transformation. The rest of the computations remain the same. We refer to \cite{FO16} and references therein for SMP of classical optimal control for SPDE with dissipative drift. We introduce the cost functional 
\beq{cost-nagu}
J(u)=\EE^\QQ \int_0^{T} \half \Big(\|X(t)-X^0(t)\|^2_{L^2(0,20)}+\|u(t)\|^2_{L^2(0,20)} \Big) dt +\half \EE^\QQ \|X(T)-X^0(T)\|^2, 
\eeq
where $X^0$ is the reference state, which is the solution of \eqref{Nagumo} without control and the noise.  

In the numerical experiment, we take $N=50, d=3$. After about $1,000$ iterations, we end up with an approximated cost of $J\approx 0.5536$. \figref{fig:Nagumo-unc-state} displays one realization of the reference state $X^0$ (left) and uncontrolled stochastic Nagumo equations (right).
\begin{figure}[H]
\includegraphics[width=0.48\textwidth]{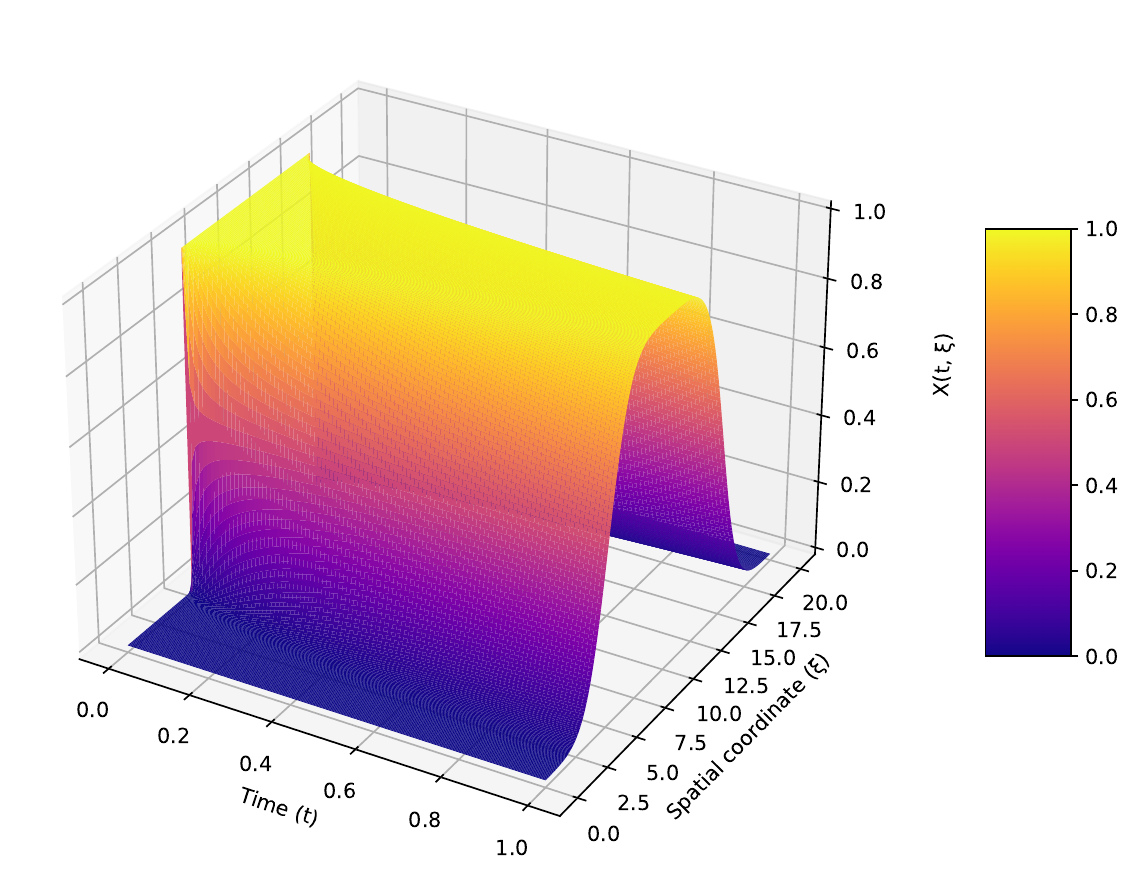}
\includegraphics[width=0.48\textwidth]{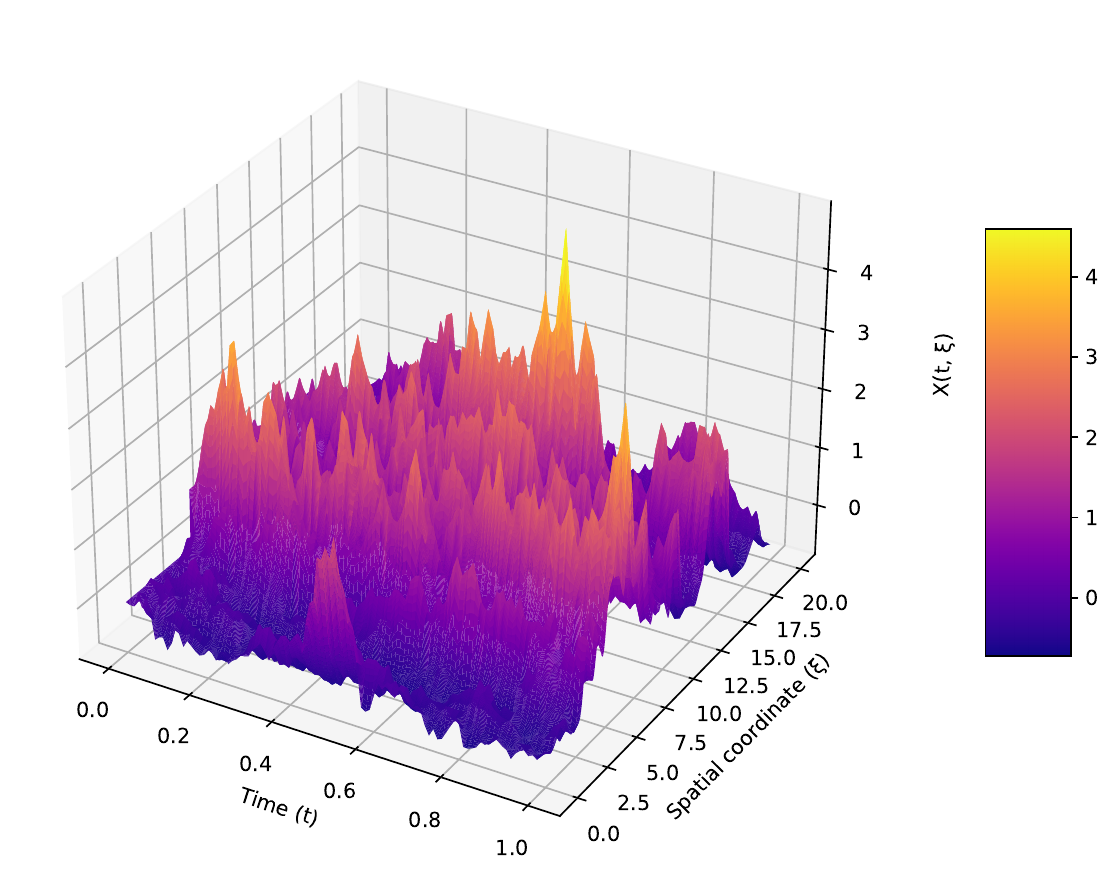}
\label{fig:Nagumo-unc-state}
\caption{Sample path of reference state and uncontrolled stochastic Nagumo equations.}
\end{figure}
\figref{fig:heat-control-state} presents our approximation of partially observed optimal control (left) and the corresponding sample path of solution of controlled stochastic Nagumo equations (right).
\begin{figure}[H]
\includegraphics[width=0.48\textwidth]{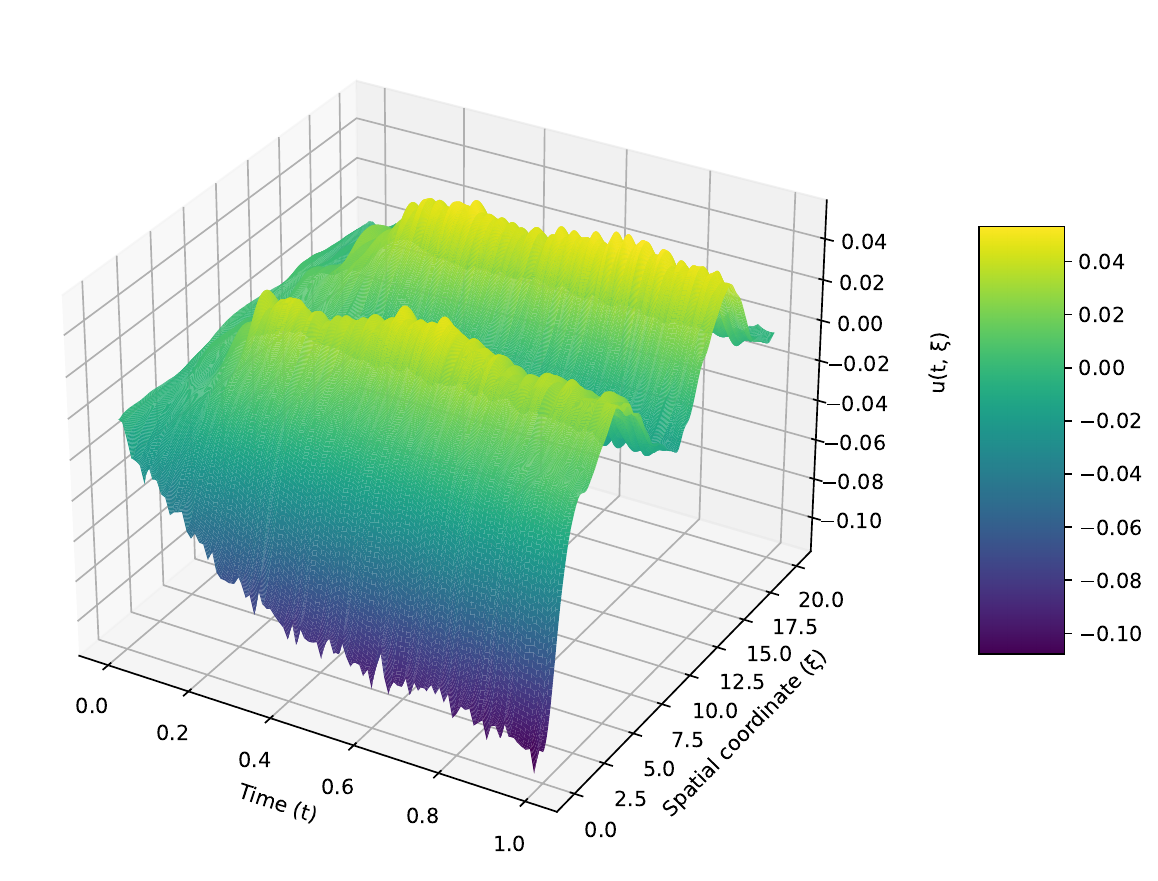}
\includegraphics[width=0.48\textwidth]{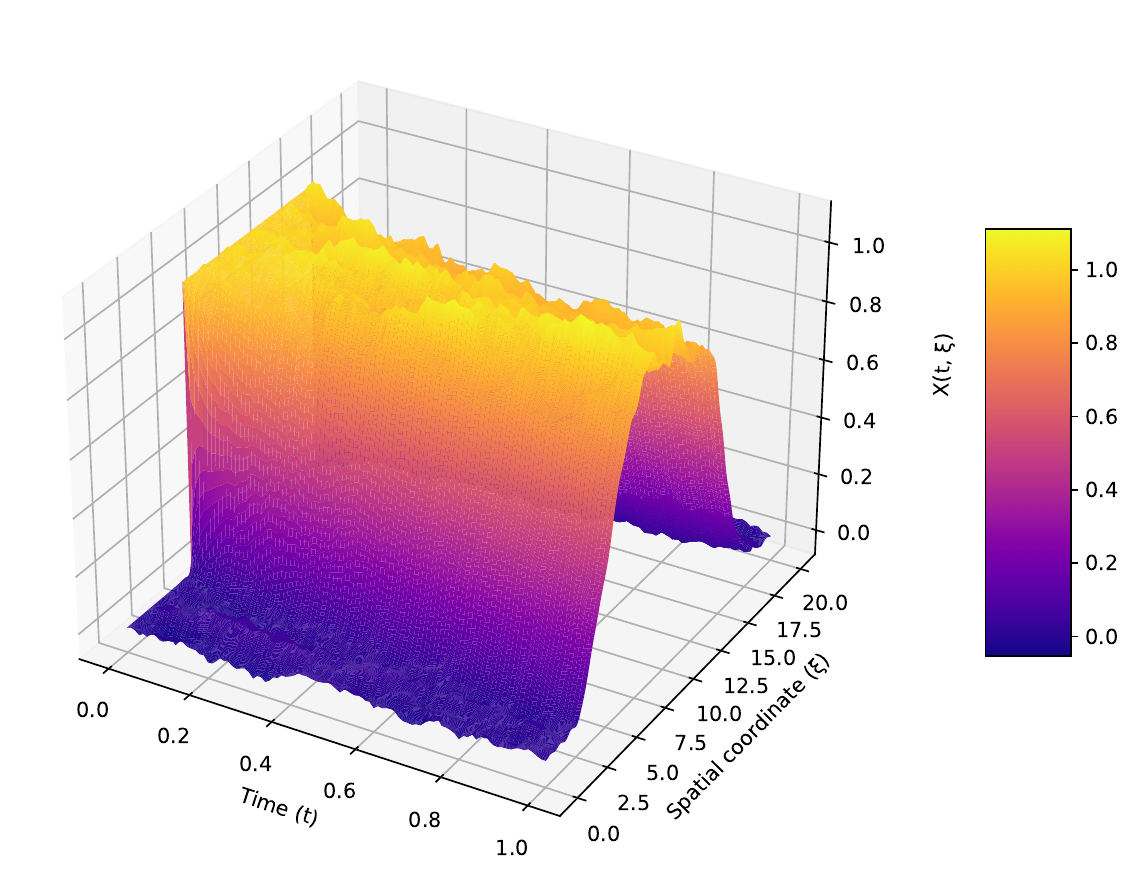}
\caption{Sample of approximated partially observed optimal control and the corresponding sample path of optimal controlled stochastic Nagumo equations.}
\label{fig:Nagumo-control-state}
\end{figure}
}
\end{exm}

\section{Concluding remarks} 
In this work, we established stochastic maximum principle for optimal control of stochastic partial differential equations with finite-dimensional partial observations, and then developed an efficient numerical framework for solving partially observed optimal control problems. Our approach integrates the numerical approximation for forward-backward SPDEs, particle filtering for state estimation, and stochastic gradient-based optimization. We use a single realization of the state and adjoint processes in stochastic gradient descent update to effectively balance computational efficiency and numerical accuracy, making it suitable for complex SPDE control problem. Future research directions include further theoretical analysis of the convergence properties of our stochastic gradient descent approach, as well as exploring its application to more complex SPDE models arising in filtering, finance, and fluid dynamics.

\section*{Acknowledgments}

F. Bao would  like to acknowledge the support from U.S. National Science Foundation through project DMS-2142672 and the support from the U.S. Department of Energy, Office of Science, Office of Advanced Scientific Computing Research, Applied Mathematics program under Grant DE-SC0025412. Y. Cao woud like to acknowledge the support from the U.S. Department of Energy, Office of Science, Office of Advanced Scientific Computing Research, Applied Mathematics program under Grants DE-SC0022253, DE-SC0025649. In addition, H. Qian would also like to thank Ruoyu Hu and Siming Liang for their valuable support on programming.

\bigskip

\end{document}